%% file: main.tex
\documentclass{siamart220329}

\usepackage{siampaper}

\usepackage[normalem]{ulem}
\usepackage{comment}

\usepackage{backref}

\renewcommand*{\backref}[1]{}% for backref < 1.33 necessary
\renewcommand*{\backrefalt}[4]{%
\ifcase #1 %
No citations.%
\or
\marginpar{\tiny cited on p. #2}%
\else
\marginpar{\tiny cited on pp. #2}%
\fi
}

\makeatletter
\AtBeginDocument{%
  \def\refstepcounter@optarg[#1]#2{%
    \cref@old@refstepcounter{#2}%
    \hyper@makecurrent{#2}%
\Hy@raisedlink{\hyper@anchorstart{\@currentHref}\hyper@anchorend}%
    \cref@constructprefix{#2}{\cref@result}%
    \if\relax\detokenize{#1}\relax
      \def\@tempa{#2}%
    \else
      \@ifundefined{cref@#1@alias}%
        {\def\@tempa{#1}}%
        {\def\@tempa{\csname cref@#1@alias\endcsname}}%
    \fi
    \protected@edef\cref@currentlabel{%
      [\@tempa][\arabic{#2}][\cref@result]%
      \csname p@#2\endcsname\csname the#2\endcsname}}}%
\makeatother

\usepackage{xstring}

\makeatletter
\let\siamoriginalLabel\label
\renewcommand{\label}[1]{%
  \Hy@raisedlink{\hyper@anchorstart{ref.#1}\hyper@anchorend}%
  \def\@currentHref{ref.#1}%
  \IfBeginWith{#1}{sec:}{\protected@edef\cref@currentlabel{[section][\arabic{section}][]\@currentlabel}}{}%
  \IfBeginWith{#1}{Sec:}{\protected@edef\cref@currentlabel{[section][\arabic{section}][]\@currentlabel}}{}%
  \siamoriginalLabel{#1}%
}
\makeatother

\crefname{algorithm}{Algorithm}{Algorithms}
\Crefname{algorithm}{Algorithm}{Algorithms}
\crefname{}{section}{sections}
\Crefname{}{Section}{Sections}

\input{ex_shared}
\DeclareMathOperator{\diag}{diag}

\ifpdf
\hypersetup{
  pdftitle={Deflation-Based Spectral Preconditioning for the Conjugate Gradient Method on Perturbed Systems},
  pdfauthor={Y. Diouane, S. Gurol, O. Mouhtal, and D. Orban}
}
\fi

\begin{document}

\maketitle
\thispagestyle{firstpage}
\pagestyle{myheadings}

% REQUIRED
\begin{abstract}
We consider sequences of large symmetric positive-definite linear systems in which spectral information from a reference matrix $A$ is reused to accelerate the solution of a nearby system with perturbed matrix $A(t)=A+tE$, where $E$ is symmetric, $\|E\|_F=1$, and $t\geq0$ is the perturbation size. We compare deflated conjugate gradient (deflated CG) with preconditioned CG (PCG) equipped with a scaled spectral preconditioner constructed from the same spectral information of $A$. First, we derive an energy-norm error characterization of deflated CG. We then use perturbation analysis to show that, after one additional PCG iteration, the squared PCG error is bounded, up to terms of order $t^2$, by an explicit multiple of the squared deflated CG error. This result clarifies the role of the scaling parameter in the preconditioner. Experiments show that the scaled spectral preconditioner recovers much of the convergence improvement provided by deflation while requiring fewer products with $A(t)$.

\end{abstract}

\begin{keywords}
Krylov methods, conjugate gradient, PCG, deflated CG, spectral preconditioner.
\end{keywords}

% REQUIRED
\begin{MSCcodes}
65F08, 65F22, 65F15
\end{MSCcodes}

\section{Introduction}

We consider sequences of large symmetric positive-definite (SPD) linear systems solved by the conjugate gradient (CG) method~\citep{Hestenes1952MethodsOC}. Such sequences arise in many applications, such as variational data assimilation~\citep{DataAssimilaton}, where consecutive systems are often closely related. Information generated while solving one system can then be recycled to accelerate subsequent solves; see, for example,~\citep{parks2006recycling,Stathopoulos,gosselet2013total,TshiGratWeavSart08}.

We consider a reference matrix $A$ together with a nearby matrix $A(t)=A+tE$,
where $E$ is symmetric, $\|E\|_F=1$, and $t$ is sufficiently small for $A(t)$ to remain positive definite. Our goal is to reuse spectral information from $A$ to accelerate CG applied to the perturbed system $A(t)x=b$.
This is particularly attractive in large-scale matrix-free settings, where constructing a preconditioner may be prohibitively expensive and only a limited number of matrix--vector products can be afforded.

One important approach for exploiting recycled information is deflated CG. Given a deflation subspace $\mathcal{W}=\operatorname{Range}(W)$, deflated CG splits the solution into a component in $\mathcal{W}$ and a complementary component computed by CG on a projected system~\citep{nicolaides1987deflation,DeflatedCG,frank2001construction}. \citet{Tang2009} mention that, for any deflation subspace, the effective condition number of the deflated system does not exceed that of the original system. A widely used choice for the deflation subspace is a set of approximate eigenvectors obtained from previous Krylov iterations through Ritz or harmonic Ritz projections~\citep{Stathopoulos,DeflatedCG,parks2006recycling,venkovic2020comparative}. In the nonsymmetric setting, SVD-based strategies have also been proposed for recycling Krylov subspaces, in particular for GMRES~\citep{AlDaas2021}. Problem-dependent alternatives include coarse spaces arising from domain decomposition and multilevel methods~\citep{Tang2009}. These approaches seek to construct a low-dimensional subspace that captures the components responsible for slow convergence at moderate additional cost.

Approaches related to deflation are limited-memory preconditioners (LMPs)~\citep{LMP}, augmentation methods~\citep{Gaul}, balancing strategies~\citep{nabben2006comparison}, and low-rank updates~\citep{Bergamaschi2020}. When spectral information is  used in the preconditioner, we refer to such methods as spectral preconditioners~\citep{carpentieri2003class,SenSpectLMP,LMP}. For broader overviews of preconditioning, see~\citep{Preconditionning1,PearsonPestana2020,wathen2015preconditioning}.

The effect of using approximate spectral information has therefore received considerable attention. The sensitivity of deflated CG to perturbations of the deflation subspace is studied in~\citep{Kahl2017}, while~\citet{SenSpectLMP} analyze how errors in approximate eigenpairs affect the spectrum of the preconditioned matrix. These results characterize the influence of inexact spectral information on the projected or preconditioned spectrum.
Although deflation can significantly improve CG convergence, its construction may become costly when the coefficient matrix changes. For a deflation basis $W\in\mathbb{R}^{n\times k}$, one must form and solve a system with the matrix $W^\top A(t)W$, which requires additional products with $A(t)$. This problem generally needs to be solved accurately, since the convergence of deflated CG can be sensitive to errors in its solution~\citep{nabben2006comparison}. To mitigate this sensitivity, related approaches avoid mapping the selected eigenvalues to zero, e.g.,~\citet{erlangga2008multilevel} shift selected eigenvalues to a nonzero part of the spectrum and use the resulting preconditioner within GMRES.

In~\citep{diouane2026spectral}, the authors introduced a scaled spectral preconditioner designed to reduce the energy-norm error after a prescribed number of preconditioned CG (PCG) iterations. The preconditioner is constructed from the eigenvalues and eigenvectors of $A$ and maps the selected eigenvalues to a common value $\theta$. Experiments showed that an appropriate choice of the scaling parameter can reproduce much of the convergence acceleration obtained with deflation while requiring a lower initialization cost. In particular, this behavior was observed in a data assimilation problem when approximate eigenpairs were recycled. However, the theoretical analysis in~\citep{diouane2026spectral} assumes that the spectral information used to construct the preconditioner is exact and is associated with the matrix of the system being solved. It therefore does not explain why similar behavior persists when spectral information from a nearby matrix is reused.

Our main objective in this work is to establish this connection theoretically when spectral information obtained from $A$ is reused for the perturbed system $A(t)x=b$. We focus on eigenpairs associated with the largest eigenvalues of $A$, although the analysis extends to any selected subset of eigenpairs. We compare the energy-norm errors generated by deflated CG and by PCG equipped with the scaled spectral preconditioner. Using first-order perturbation analysis, we show that, after one additional PCG iteration, the squared energy-norm error of PCG is bounded by the squared energy-norm error of deflated CG multiplied by an explicit factor, plus higher-order terms in $t$. The resulting bound clarifies how the scaling parameter $\theta$ should be chosen relative to the nondeflated spectrum and provides a theoretical explanation for the similar convergence behavior observed numerically. We complement the theoretical analysis with numerical experiments on synthetic linear systems.

The paper is organized as follows. In~\Cref{sec:DefCG}, we review deflated CG and derive a characterization of its energy-norm error. We formulate the perturbed problem, develop the first-order analysis, establish the comparison between PCG and deflated CG, and discuss practical choices of the scaling parameter in~\Cref{sec:perturbed_system}. We report the numerical experiments in~\Cref{sec:numerical_experiments} and present our conclusions in~\Cref{sec:conclusion}.

\noindent \textbf{Notation:}
The spectral norm of a matrix $X$ is denoted $\| \|_2$, its spectral radius $\rho(X)$, and its range $\mathcal{R}(X)$. For a symmetric matrix $X \in \mathbb{R}^{n\times n}$, we denote its eigenvalues ${\lambda_1(X),\ldots,\lambda_n(X)}$ and the corresponding eigenvectors ${s_1(X),\ldots,s_n(X)}$. 
Throughout the paper, $A\in\mathbb{R}^{n\times n}$ is SPD. When there is no ambiguity, its $i$th eigenvalue and corresponding eigenvector are denoted $\lambda_i$ and $s_i$, respectively. For $k\in\{1,\ldots,n-1\}$, we define $S_k=[s_1,\ldots,s_k]\in\mathbb{R}^{n\times k}$ and $\widebar S_k=[s_{k+1},\ldots,s_n]\in\mathbb{R}^{n\times(n-k)}$. Thus, $S_k$ contains the eigenvectors associated with the $k$ largest eigenvalues of $A$, while the columns of $\widebar S_k$ span the orthogonal complement of $\mathcal{R}(S_k)$.
For $x\in\mathbb{R}^n$, the $A$-norm, also called the \emph{energy norm}, is defined by $\|x\|_A=\sqrt{x^\top A x}$. We denote $\mathbb{P}_\ell(0)$ the set of polynomials of degree at most $\ell$ with value $1$ at zero.

In what follows, all results are derived under the assumption of exact arithmetic.

\section{Deflated conjugate gradient}
\label{sec:DefCG}
Let $W\in \mathbb{R}^{n \times k}$ have full column rank. Deflated CG~\citep{DeflatedCG,frank2001construction} decomposes the solution of $Ax=b$ into a
component in the deflation subspace $\mathcal R(W)$ and a complementary component computed by CG on a projected system.
Let $V=AW$ and define the projection $
P \;=\; I_n - V \bigl(W^{\top} V\bigr)^{-1} W^{\top}$.
Deflated CG then generates iterates of the form~\citep{DeflatedCG}
\begin{equation}\label{eq:defcg_iterates_gecase}
x_\ell^{\text{D}} \;=\; W \bigl(W^{\top} V\bigr)^{-1} W^\top b \;+\; P^\top z_\ell,
\end{equation}
where \( z_\ell \) are the iterates of CG applied to the projected system
\begin{equation}\label{eq:ProjSys}
\widehat{A} z \;=\; \widehat{b},
\end{equation}
with \( \widehat{A} = P A \), \( \widehat{b} = P b \), and arbitrary starting vector \( z_0 \).  
The projected system~\eqref{eq:ProjSys} can be solved by CG~\citep{kammerer1972convergence}, since \( \widehat{A} \) is symmetric positive semidefinite, satisfies
\(\dim(\mathcal{R}(\widehat{A})) = n - k\)~\citep{DeflatedCG}, and \(\widehat{b} \in \mathcal{R}(\widehat{A})\). Consider the eigen-decomposition
\begin{equation*}
\widehat{A} \;=\; [Q_k\; \widebar Q_k]
\begin{bmatrix}
0 & 0 \\ 0 & \widebar{\Delta}_k
\end{bmatrix}
[Q_k\; \widebar Q_k]^\top,
\end{equation*}
where \(Q_k\in\mathbb R^{n\times k}\) spans the nullspace of \(\widehat A\), \( \widebar{\Delta}_k \in \mathbb{R}^{(n-k) \times (n-k)} \) is diagonal of nonzero eigenvalues of \(\widehat{A}\), and \( \widebar Q_k \in \mathbb{R}^{n \times (n-k)} \)  the corresponding eigenvectors. In the next lemma, we show that solving~\eqref{eq:ProjSys} is equivalent to solving the reduced system
\begin{equation}\label{eq:small_system}
\widebar{\Delta}_k \widetilde{y} \;=\; \widetilde{b},\qquad \mbox{where} \qquad \widetilde{b} = \widebar Q_k^\top \widehat{b}.\end{equation}
\begin{lemma}
\label{lemma:CG_xproj_yproj}
Let $\{z_\ell\}$ be the iterates generated by CG applied to~\eqref{eq:ProjSys} with an initial vector $z_0$, and let $\{\widetilde{y}_\ell\}$ be the iterates generated by CG applied to~\eqref{eq:small_system} with initial vector $\widetilde{y}_0=\widebar Q_k^{\top}z_0$. Let \(\widetilde{r}_\ell\) and \(\widetilde{p}_{\ell}\) be the residuals and search directions when solving~\eqref{eq:small_system} with CG, and \(\widehat{r}_\ell\) and \(\widehat{p}_\ell\) be the residuals and search directions when solving~\eqref{eq:ProjSys} with CG. Then, for every $\ell\in\{0,\ldots,n-k\}$ such that $\widetilde{y}_\ell$ exists, 
\begin{align}
\label{CG-equivalence1}
    \widetilde{r}_\ell & = \widebar Q_k^\top \widehat{r}_\ell \quad \text{and} \quad \widehat{r}_\ell \in \mathcal{R}(\widehat{A}) = \mathcal{R}(\widebar Q_k), \\
    \label{CG-equivalence2}
    \widetilde{p}_{\ell} & = \widebar Q_k^\top \widehat{p}_\ell  \quad \text{and} \quad \widehat{p}_\ell \in \mathcal{R}(\widehat{A}),  \\
    \label{CG-equivalence3}
    \widetilde{y}_\ell & = \widebar Q_k^\top z_\ell, \\
    \label{CG-equivalence4}
    Q_k Q_k^\top z_\ell & = Q_k Q_k^\top z_0.
\end{align}
\end{lemma}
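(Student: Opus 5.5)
We argue by induction on $\ell$, running the two CG recurrences side by side. Write the CG iteration for~\eqref{eq:ProjSys} as
\[
\widehat{r}_0=\widehat b-\widehat A z_0,\quad \widehat p_0=\widehat r_0,\quad
\alpha_\ell=\frac{\widehat r_\ell^\top\widehat r_\ell}{\widehat p_\ell^\top\widehat A\widehat p_\ell},\quad
z_{\ell+1}=z_\ell+\alpha_\ell\widehat p_\ell,
\]
\[
\widehat r_{\ell+1}=\widehat r_\ell-\alpha_\ell\widehat A\widehat p_\ell,\quad
\beta_\ell=\frac{\widehat r_{\ell+1}^\top\widehat r_{\ell+1}}{\widehat r_\ell^\top\widehat r_\ell},\quad
\widehat p_{\ell+1}=\widehat r_{\ell+1}+\beta_\ell\widehat p_\ell .
\]
The iteration for~\eqref{eq:small_system} is the same with tildes, using $\widebar\Delta_k$ and the coefficients $\widetilde\alpha_\ell,\widetilde\beta_\ell$. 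The whole proof rests on three elementary facts. First, $[Q_k\;\widebar Q_k]$ is orthogonal, so $I_n=Q_kQ_k^\top+\widebar Q_k\widebar Q_k^\top$. Second, $\widehat A=\widebar Q_k\widebar\Delta_k\widebar Q_k^\top$. Third, $\widehat b\in\mathcal R(\widehat A)=\mathcal R(\widebar Q_k)$, as recalled before the lemma. Here we use that $\widehat A$ is symmetric, as the paper states, so its eigendecomposition is orthogonal.

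\textbf{Base case.} We have $\widehat r_0=\widehat b-\widehat A z_0\in\mathcal R(\widebar Q_k)$, and
\[
\widebar Q_k^\top\widehat r_0=\widetilde b-\widebar\Delta_k\widebar Q_k^\top z_0=\widetilde b-\widebar\Delta_k\widetilde y_0=\widetilde r_0 .
\]
Since $\widehat p_0=\widehat r_0$, relation~\eqref{CG-equivalence2} holds at $\ell=0$. Relations~\eqref{CG-equivalence3} and~\eqref{CG-equivalence4} are trivial at $\ell=0$.

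\textbf{Inductive step.} The key observation is that $\widebar Q_k^\top$ is an isometry on $\mathcal R(\widebar Q_k)$. Because $\widehat r_\ell=\widebar Q_k\widetilde r_\ell$ and $\widehat p_\ell=\widebar Q_k\widetilde p_\ell$ lie in that range, we get
\[
\widehat r_\ell^\top\widehat r_\ell=\widetilde r_\ell^\top\widetilde r_\ell,\qquad
\widehat p_\ell^\top\widehat A\widehat p_\ell=\widetilde p_\ell^\top\widebar\Delta_k\widetilde p_\ell .
\]
Hence $\alpha_\ell=\widetilde\alpha_\ell$. In particular, the CG iteration on~\eqref{eq:ProjSys} is well defined, with no division by zero, exactly when the small iteration is. This explains the hypothesis ``$\widetilde y_\ell$ exists'', and it is the one step needing care: $\widehat A$ is only semidefinite, so we must check that $\widehat p_\ell^\top\widehat A\widehat p_\ell>0$ whenever $\widetilde p_\ell\neq0$. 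That follows because $\widebar\Delta_k$ is positive definite; its diagonal entries are the nonzero eigenvalues of a positive semidefinite matrix.

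Now apply $\widebar Q_k^\top$ to the update formulas, using $\widebar Q_k^\top\widehat A=\widebar\Delta_k\widebar Q_k^\top$. This gives
\[
\widebar Q_k^\top z_{\ell+1}=\widetilde y_{\ell+1},\qquad
\widebar Q_k^\top\widehat r_{\ell+1}=\widetilde r_{\ell+1}.
\]
The new residual $\widehat r_{\ell+1}$ stays in $\mathcal R(\widebar Q_k)$, being a combination of $\widehat r_\ell$ and $\widehat A\widehat p_\ell$. By the same isometry, $\beta_\ell=\widetilde\beta_\ell$, and then
\[
\widehat p_{\ell+1}\in\mathcal R(\widebar Q_k),\qquad \widebar Q_k^\top\widehat p_{\ell+1}=\widetilde p_{\ell+1}.
\]

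\textbf{Relation~\eqref{CG-equivalence4}.} We have $Q_k^\top\widehat p_j=0$ for all $j$, since $\widehat p_j\in\mathcal R(\widebar Q_k)$ and $Q_k^\top\widebar Q_k=0$. Therefore
\[
Q_kQ_k^\top z_\ell=Q_kQ_k^\top z_0+\sum_{j<\ell}\alpha_jQ_kQ_k^\top\widehat p_j=Q_kQ_k^\top z_0 .
\]
The restriction $\ell\le n-k$ just reflects finite termination of CG on the $(n-k)$-dimensional system~\eqref{eq:small_system}.
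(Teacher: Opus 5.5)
Your proof is correct and follows the paper's argument: an induction that uses $\widebar Q_k^\top\widehat A=\widebar\Delta_k\widebar Q_k^\top$ together with $\widebar Q_k\widebar Q_k^\top\widehat r_\ell=\widehat r_\ell$ to show the two CG iterations have equal step lengths and direction-update coefficients, and then $Q_k^\top\widehat p_j=0$ to obtain~\eqref{CG-equivalence4}. The differences are cosmetic: you propagate the residual through the recurrence instead of recomputing $\widehat b-\widehat A z_{\ell+1}$, and you explicitly check that CG on the semidefinite system never divides by zero, which the paper leaves implicit.
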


\begin{proof}
We proceed by induction. For $\ell=0$,
\begin{align*}
    \widetilde r_0= \widebar Q_k^\top \widehat{b} - \widebar{\Delta}_k \widetilde{y}_0
    = \widebar Q_k^\top (\widehat{b} - \widebar Q_k \widebar{\Delta}_k  \widebar Q_k^\top z_0) = \widebar Q_k^\top(\widehat{b} - \widehat{A}z_0) = \widebar Q_k^\top  \widehat{r}_0.
\end{align*}
Because the projected system is consistent, $\widehat b\in\mathcal R(\widehat A)$; hence $\widehat r_0\in\mathcal R(\widehat A)$. Furthermore, $\widetilde{p}_0 =\widetilde{r}_0$ and $\widehat p_0=\widehat r_0$, so $\widetilde{p}_0=\widebar Q_k^\top\widehat p_0$. The identities~\eqref{CG-equivalence3} and~\eqref{CG-equivalence4} hold at $\ell=0$ by definition.
Assume now that~\eqref{CG-equivalence1}--\eqref{CG-equivalence4} hold at iteration $\ell<n-k$ and CG has not terminated. 
The CG update for the linear system~\eqref{eq:small_system} gives

\begin{align*}
    \widetilde{y}_{\ell+1} &=\widetilde{y}_{\ell} + \dfrac{\widetilde{r}_\ell^\top \widetilde{r}_\ell}{\widetilde{p}_{\ell}^\top \widebar{\Delta}_k \widetilde{p}_{\ell}} \widetilde{p}_{\ell} = \widebar Q_k^\top z_\ell + \dfrac{{\widehat{r}_\ell}^\top \widebar Q_k\widebar Q_k^\top \widehat{r}_\ell}{\widehat{p}_\ell^\top \widebar Q_k \widebar{\Delta}_k \widebar Q_k^\top \widehat{p}_\ell} \widebar Q_k^\top \widehat{p}_\ell.
\end{align*}
Since $\widehat{r}_\ell \in \mathcal{R}(\widebar Q_k)$, then $\widebar Q_k\widebar Q_k^\top \widehat{r}_\ell = \widehat{r}_\ell$. We get
$ \widetilde{y}_{\ell+1} = \widebar Q_k^\top\left( z_\ell + \dfrac{\widehat{r}_\ell^\top \widehat{r}_\ell}{\widehat{p}_\ell^\top \widehat{A} \widehat{p}_\ell} \widehat{p}_\ell\right)
    =\widebar Q_k^\top z_{\ell+1}$.
Using this identity, the reduced residual satisfies
\begin{align*}
    \widetilde{r}_{\ell+1} = \widebar Q_k^\top \widehat{b} - \widebar{\Delta}_k \widetilde{y}_{\ell+1}
    =\widebar Q_k^\top \left(\widehat{b} - \widebar Q_k\widebar{\Delta}_k \widebar Q_k^\top z_{\ell+1}\right)
    = \widebar Q_k^\top(\widehat{b} - \widehat{A}z_{\ell+1})
    =\widebar Q_k^\top \widehat{r}_{\ell+1},
\end{align*}
where $\widehat{r}_{\ell+1} \in \mathcal{R}(\widehat{A})$. For the search direction, from the CG recurrence, we get
\begin{align*}
    \widetilde{p}_{\ell+1} = \widetilde{r}_{\ell+1} + \dfrac{\widetilde{r}_{\ell+1}^\top \widetilde{r}_{\ell+1}}{\widetilde{r}_\ell^\top \widetilde{r}_\ell} \widetilde{p}_{\ell}
    &= \widebar Q_k^\top \widehat{r}_{\ell+1} + \dfrac{\widehat{r}_{\ell+1}^\top \widebar Q_k\widebar Q_k^\top \widehat{r}_{\ell+1}}{\widehat{r}_\ell^\top \widebar Q_k \widebar Q_k^\top \widehat{r}_\ell} \widebar Q_k^\top \widehat{p}_\ell\\
    &=\widebar Q_k^\top\left(\widehat{r}_{\ell+1} + \dfrac{\widehat{r}_{\ell+1}^\top \widehat{r}_{\ell+1}}{\widehat{r}_\ell^\top \widehat{r}_\ell}  \widehat{p}_\ell\right)
    =\widebar Q_k^\top \widehat{p}_{\ell+1}.
\end{align*}
Since $\widehat p_{\ell+1}$ is a linear combination of $\widehat p_\ell$ and $\widehat r_{\ell+1}$, it also belongs to $\mathcal R(\widehat A)$. Finally, $Q_kQ_k^\top\widehat p_{\ell}=0$, so the null-space component of $z_{\ell+1}$ remains equal to that of $z_0$. This proves~\eqref{CG-equivalence4} and completes the induction.
\end{proof}

In exact arithmetic, CG finds the exact solution $\widetilde{y}^\ast$ of~\eqref{eq:small_system} in at most $n-k$ iterations.
From~\eqref{CG-equivalence3} and~\eqref{CG-equivalence4}, the corresponding final iterate for~\eqref{eq:ProjSys} is
\begin{equation}\label{eq:explicite_sol}
    z^{\ast} = Q_k Q_k^\top z_0 + \widebar Q_k \widebar{\Delta}_k^{-1} \widebar Q_k^\top \widehat{b}.
\end{equation}
The first term in~\eqref{eq:explicite_sol} is the component of the initial vector $z_0$ in the null space of $\widehat{A}$. By~\eqref{CG-equivalence4}, in exact arithmetic, CG leaves it unchanged. The second term is the uniquely determined component in $\mathcal{R}(\widehat{A})$ that solves the projected system~\eqref{eq:ProjSys}.
For completeness, we now express the energy norm of the deflated CG error.
\begin{proposition}
\label{prop:deflatedCGTwoversio}
Let $1\leq \ell \leq n-k$. Assuming that the $\ell$-th deflated CG iterate $x_\ell^{\text{D}}$ exists, the corresponding error satisfies
\begin{equation*}
   \left\| x^\ast - x_\ell^{\text{D}}\right\|^2_{A} = \min_{p \in \mathbb{P}_{\ell}(0)}\lVert p (\widebar{\Delta}_k) \left( \widetilde{y}^{\ast} - \widetilde{y}_0\right) \rVert_{\widebar{\Delta}_k}^2 = \displaystyle \min_{p \in \mathbb{P}_{\ell}(0)}\lVert p (\widebar{\Delta}_k) \widebar Q_k^{\top} \left( z^{\ast} - z_0\right) \rVert_{\widebar{\Delta}_k}^2,
\end{equation*}
where $x^\ast$ represents the exact solution of $A x = b$ and $\widetilde{y}_0=\widebar Q_k^{\top}z_0$.
\end{proposition}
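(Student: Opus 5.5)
The plan is to reduce the deflated CG error to the error of CG on the reduced system~\eqref{eq:small_system}, and then invoke the classical optimality of CG.

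\textbf{Step 1: rewrite the error through $P^\top$.} From~\eqref{eq:defcg_iterates_gecase}, the exact solution can be written as $x^\ast = W(W^\top V)^{-1}W^\top b + P^\top z^\ast$, where $z^\ast$ is any solution of~\eqref{eq:ProjSys}. To justify this, I will use the standard deflation identity
\begin{equation*}
I_n = W(W^\top V)^{-1}W^\top A + P^\top,
\end{equation*}
which holds because $P^\top = I_n - W(W^\top V)^{-1}V^\top$ and $V^\top = W^\top A$. Applying it to $x^\ast$ gives $x^\ast = W(W^\top V)^{-1}W^\top b + P^\top x^\ast$. Next, $P^\top(x^\ast - z^\ast)=0$ for any solution $z^\ast$. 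Indeed, $\widehat A(x^\ast - z^\ast) = Pb - Pb = 0$, so $x^\ast - z^\ast$ lies in the null space of $PA$. This null space is $\mathcal R(W)$: it contains $\mathcal R(W)$ since $PAW=PV=0$, and it has dimension $k$. Finally, $P^\top W = 0$. Therefore
\begin{equation*}
x^\ast - x_\ell^{\text{D}} = P^\top(z^\ast - z_\ell).
\end{equation*}

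\textbf{Step 2: pass from the $A$-norm to the $\widehat A$-seminorm.} I will verify that $PA = AP^\top$ and $P^2=P$. Together these give $PAP^\top = P^2A = PA = \widehat A$. Hence
\begin{equation*}
\|x^\ast - x_\ell^{\text{D}}\|_A^2 = (z^\ast - z_\ell)^\top \widehat A (z^\ast - z_\ell).
\end{equation*}
Substituting the eigendecomposition of $\widehat A$, this equals
\begin{equation*}
\|\widebar Q_k^\top(z^\ast - z_\ell)\|_{\widebar\Delta_k}^2 = \|\widetilde y^\ast - \widetilde y_\ell\|_{\widebar\Delta_k}^2,
\end{equation*}
where I use~\eqref{CG-equivalence3} together with $\widebar Q_k^\top z^\ast = \widebar\Delta_k^{-1}\widebar Q_k^\top \widehat b = \widetilde y^\ast$, which follows from~\eqref{eq:explicite_sol}. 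This also shows that the right-hand side does not depend on which solution $z^\ast$ is chosen.

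\textbf{Step 3: apply CG optimality.} Since $\widebar\Delta_k$ is SPD, the standard optimality property of CG applies to the iterates $\widetilde y_\ell$ generated by Lemma~\ref{lemma:CG_xproj_yproj}. The error $\widetilde y^\ast - \widetilde y_\ell$ minimizes the $\widebar\Delta_k$-norm over $\widetilde y^\ast - \widetilde y_0 + \mathcal K_\ell(\widebar\Delta_k,\widetilde r_0)$. Because $\widetilde r_0 = \widebar\Delta_k(\widetilde y^\ast - \widetilde y_0)$, this set equals $\{p(\widebar\Delta_k)(\widetilde y^\ast - \widetilde y_0): p\in\mathbb P_\ell(0)\}$, which gives the first equality of the statement. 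The second equality is immediate from $\widetilde y^\ast - \widetilde y_0 = \widebar Q_k^\top(z^\ast - z_0)$.

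The main obstacle is Step 1. It requires carefully establishing the projector identities ($P^\top W=0$, $PA=AP^\top$, and the decomposition of $I_n$) and the description of the null space of $\widehat A$, while keeping in mind that $z^\ast$ is determined only up to a null-space component. The remaining steps are routine once those identities are in place.
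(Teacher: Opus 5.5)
Your proposal is correct and follows essentially the same route as the paper: write the error as $P^\top(z^\ast-z_\ell)$, use $PAP^\top=PA=\widebar Q_k\widebar\Delta_k\widebar Q_k^\top$ to pass to $\|\widetilde y^\ast-\widetilde y_\ell\|_{\widebar\Delta_k}^2$, and then apply CG optimality on the reduced system. The only difference is that your Step~1 justifies, via $I_n=W(W^\top V)^{-1}W^\top A+P^\top$ and $P^\top W=0$, the substitution of $z^\ast$ into the iterate formula that the paper takes for granted, and you verify $PAP^\top=PA$ directly rather than citing it.
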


\begin{proof}
Applying~\eqref{eq:defcg_iterates_gecase} to both $x_\ell^{\mathrm D}$ and the exact solution $x^\ast$ (with $z_\ell$ replaced by $z^\ast$), and using
$P A P^\top = P A = \widebar Q_k \widebar{\Delta}_k \widebar Q_k^\top$ (see~\citep{frank2001construction}) gives
    \begin{align*}
    \left\| {x}^\ast -  {x}_\ell^{\text{D}}\right\|_{A}^2  = \left\| P^\top\left(z^{\ast} - z_\ell\right)\right\|_{A}^2 
    & = \left(z^{\ast} - z_\ell \right)^\top P A P^\top \left( z^{\ast} - z_\ell \right ) \\
    &= \left(z^{\ast} - z_\ell \right)^\top P A\left( z^{\ast} - z_\ell \right ) \\
    & = \left(z^{\ast} - z_\ell \right)^\top \widebar Q_k \widebar{\Delta}_k \widebar Q_k^\top \left( z^{\ast} - z_\ell \right ) \\
   & = \left\| \widebar Q_k^\top(z^{\ast} - z_\ell)\right\|_{\widebar{\Delta}_k}^2 = \left\|\widetilde{y}^{\ast} - \widetilde{y}_\ell\right\|^2_{\widebar{\Delta}_k}.
    \end{align*}
The CG minimization property for~\eqref{eq:small_system} now gives
\begin{equation*}
\left\|\widetilde{y}^{\ast} - \widetilde{y}_\ell\right\|^2_{\widebar{\Delta}_k} 
=\displaystyle \min_{p \in \mathbb{P}_{\ell}(0)}\lVert p (\widebar{\Delta}_k) \left( \widetilde{y}^{\ast} - \widetilde{y}_0\right) \rVert_{\widebar{\Delta}_k}^2
= \displaystyle \min_{p \in \mathbb{P}_{\ell}(0)}\lVert p (\widebar{\Delta}_k) \widebar Q_k^{\top} \left( z^{\ast} - z_0\right) \rVert_{\widebar{\Delta}_k}^2.
\end{equation*}
\end{proof}

%In the next section, we formulate deflated CG and PCG with the scaled spectral preconditioner~\citep{diouane2026spectral} for a perturbed system and develop the first-order perturbation analysis required to compare their energy-norm errors. 

\section{Deflated CG and PCG for a perturbed system}\label{sec:perturbed_system}
Let \( A(t) = A + tE \), where \( t \geq 0 \) and $E$ is symmetric with \( \|E\|_F = 1 \). Since $A$ is SPD, \( A(t) \) is SPD for sufficiently small values of \( t \).
In this section, we compare the scaled spectral preconditioner in~\citep{diouane2026spectral} with deflated CG when solving the perturbed linear system
\begin{equation}\label{eq:pert_sys}
    A(t) x = b.
\end{equation}

Let $1\leq k\leq n-1$. For deflated CG, we use the subspace $W=S_k=[s_1,\ldots,s_k]$, formed by the eigenvectors associated with the $k$ largest eigenvalues $\lambda_1,\lambda_2,\ldots,\lambda_k$ of $A$. Set $V(t)=A(t)S_k$ and $P(t)=I_n-V(t)\left(S_k^\top V(t)\right)^{-1}S_k^\top$.
The projected system is
\begin{equation}\label{eq:projected_system}
\widehat{A}(t) z = \widehat{b}(t),
\end{equation}
where $\widehat{A}(t):=P(t)A(t)$ and $\widehat{b}(t):=P(t)b$. The $\ell$-th iterate of deflated CG is given by
\begin{equation}
\label{eq:defcg_iterates}
  x_{\ell}^{[\mathrm{D}]}(t)
  = S_k \bigl(S_k^\top V(t)\bigr)^{-1} S_k^\top b + P(t)^\top z_\ell(t),
\end{equation} 
where $z_\ell(t)$ is the iterate of CG when solving~\eqref{eq:projected_system}.

Let $\theta>0$ and $\Lambda_k:=\diag(\lambda_1,\ldots,\lambda_k)$, the scaled spectral preconditioner is defined by $U^{\theta}=I_n+S_k\left(\sqrt{\theta}\,\Lambda_k^{-1/2}-I_k\right)S_k^\top$. The associated preconditioned system is
\begin{equation}\label{eq:precond_pert_sys}
    U^{\theta}A(t)U^{\theta} y = U^{\theta}b,
    \qquad x=U^{\theta}y.
\end{equation}
Similarly, the $\ell$th iterate of PCG is $x_\ell^{[P]}(t)=U^\theta y_\ell(t)$, where $y_\ell(t)$ is the iterate of CG when solving~\eqref{eq:precond_pert_sys} starting from $y_0$.
\subsection{Perturbation analysis}
In this section, we introduce the perturbation results to quantify how eigenvalues and eigenvectors change with $t$ and to control the gap terms in our subsequent comparison of the energy-norm errors of PCG and deflated CG.
We first recall the first-order perturbation theory for simple eigenvalues and eigenvectors of symmetric matrices. These classical results (see, e.g.,~\citep{stewart2001matrix}) describe how the eigenpairs $\{\lambda_{i}(t),s_i(t)\}$ of \( A(t) \) evolve with small $t$. For $i=1,\ldots,n$ and if $\lambda_i$ is simple, then
\begin{equation}
     \lambda_{i}(t) = \lambda_{i} + t s_i^\top E s_i + O(t^2), \quad
     s_i(t) = s_{i} + t \underline{S}_{i} (\lambda_i I_{n-1} - \underline{\Lambda}_i)^{-1} \underline{S}_{i}^\top E s_i + O(t^2), \label{eq:FirstOrder}
\end{equation}
where \(\underline{\Lambda}_i=\diag(\lambda_j)_{j\neq i}\), and \(\underline{S}_i\in\mathbb{R}^{n\times(n-1)}\) is obtained by removing the $i$th column from the eigenvector matrix of $A$.
Second-order remainder estimates for eigenpairs are known;
see, e.g.,~\citep[Ch.~II]{Kato1995}. For our analysis, we provide a short self-contained proof for symmetric matrices. The resulting bound involves only concrete constants and an explicit range of validity for $t$, both expressed in terms of the spectral gap, without relying on contour integration.

\begin{proposition}\label{prop:gap_bound}
Let $i\in\{1,\ldots,n\}$, and assume that $\lambda_i$ is simple. Let $g_i:=\min_{j\neq i}|\lambda_i-\lambda_j|>0$
be the spectral gap around $\lambda_i$. Then, for every $t \in [0, g_i/(4\|E\|_2)]$, $A(t)$ admits a simple eigenpair $\{\lambda_i(t),s_i(t)\}$ such that
{\small
\begin{equation*}%\label{eq:gap_remainder:2}
\bigl|\lambda_i(t)-\lambda_i-t\,s_i^\top E s_i\bigr|\leq\frac{4\|E\|_2^2}{g_i}\,t^2,\quad
\Bigl\|s_i(t)-s_i-t\underline S_i(\lambda_i I_{n-1}-\underline\Lambda_i)^{-1}\underline S_i^\top Es_i\Bigr\|_2\leq\frac{12\|E\|_2^2}{g_i^2}\,t^2.
\end{equation*}
}
\end{proposition}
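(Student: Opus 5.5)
I will prove both bounds with a self-contained fixed-point argument in the eigenbasis of $A$. I will not use analytic perturbation theory.

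\textbf{Setup.} Set $\varepsilon:=t\|E\|_2\le g_i/4$. Write the candidate eigenvector in the normalization $s_i(t)=s_i+\underline S_i\,w$ with $w\in\mathbb{R}^{n-1}$, so that $s_i^\top s_i(t)=1$; any unit normalization will be handled at the end. Projecting $A(t)s_i(t)=\lambda_i(t)s_i(t)$ onto $s_i$ and onto $\mathcal R(\underline S_i)$ gives the two equations
\begin{equation*}
\lambda_i(t)=\lambda_i+t\,s_i^\top E s_i+t\,s_i^\top E\underline S_i w,\qquad
(\lambda_i(t)I_{n-1}-\underline\Lambda_i)\,w=t\,\underline S_i^\top E s_i+t\,\underline S_i^\top E\underline S_i w.
\end{equation*}
Substituting the first equation into the second yields a fixed-point equation $w=F(w)$. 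Here $F(w)=t(\mu(w)I-\underline\Lambda_i)^{-1}\underline S_i^\top E(s_i+\underline S_i w)$ and $\mu(w)=\lambda_i+t s_i^\top E(s_i+\underline S_iw)$.

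\textbf{Existence and uniqueness.} On the ball $\|w\|_2\le \rho$ we have $|\mu(w)-\lambda_i|\le \varepsilon(1+\rho)$. Hence, for $\rho\le 1$, $|\mu(w)-\lambda_j|\ge g_i-2\varepsilon\ge g_i/2$ for all $j\ne i$. It follows that $\|F(w)\|_2\le 2\varepsilon(1+\rho)/g_i\le 4\varepsilon/g_i\le 1$. Taking $\rho=4\varepsilon/g_i\le1$, the map $F$ sends this ball into itself.

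Next I will check that $F$ is a contraction on the ball. The resolvent difference satisfies
\begin{equation*}
\|(\mu I-\underline\Lambda_i)^{-1}-(\mu' I-\underline\Lambda_i)^{-1}\|_2\le |\mu-\mu'|\,(2/g_i)^2.
\end{equation*}
Together with $|\mu-\mu'|\le\varepsilon\|w-w'\|_2$, this should give a Lipschitz constant of order $2\varepsilon/g_i+8\varepsilon^2(1+\rho)/g_i^2<1$ when $\varepsilon\le g_i/4$. This arithmetic is the step I expect to be tight. If the constants do not close at $g_i/4$, I would use Brouwer's theorem for existence instead. Simplicity of $\lambda_i(t)$ then follows from Weyl's inequality: the other eigenvalues of $A(t)$ lie within $\varepsilon\le g_i/4$ of the $\lambda_j$, while $|\lambda_i(t)-\lambda_i|\le\varepsilon(1+\rho)\le g_i/2$. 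This separates $\lambda_i(t)$ from the rest, since $g_i-\varepsilon>g_i/2$.

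\textbf{Remainder bounds.} Define the first-order term $w_1:=t(\lambda_iI-\underline\Lambda_i)^{-1}\underline S_i^\top Es_i$, so that $\|w_1\|_2\le \varepsilon/g_i$ and $\|w\|_2\le 4\varepsilon/g_i$.

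For the eigenvalue, the first equation gives $|\lambda_i(t)-\lambda_i-ts_i^\top Es_i|\le \varepsilon\|w\|_2\le 4\varepsilon^2/g_i$. This is exactly $4\|E\|_2^2t^2/g_i$.

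For the eigenvector, I split
\begin{equation*}
w-w_1=t\bigl[(\mu I-\underline\Lambda_i)^{-1}-(\lambda_iI-\underline\Lambda_i)^{-1}\bigr]\underline S_i^\top Es_i+t(\mu I-\underline\Lambda_i)^{-1}\underline S_i^\top E\underline S_iw.
\end{equation*}
The first term is bounded by $\varepsilon\cdot|\mu-\lambda_i|\cdot\frac{1}{g_i}\cdot\frac{2}{g_i}\le 2\varepsilon^2(1+\rho)/g_i^2\le 4\varepsilon^2/g_i^2$. The second term is bounded by $\frac{2\varepsilon}{g_i}\|w\|_2\le 8\varepsilon^2/g_i^2$. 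The total is $12\varepsilon^2/g_i^2$, which matches the stated constant. This indicates that the intended normalization is $s_i^\top s_i(t)=1$, which is also consistent with~\eqref{eq:FirstOrder}.

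Finally, I will remark that this normalized vector is an eigenvector for $\lambda_i(t)$ and that it is the $s_i(t)$ of the statement. If a unit-norm $s_i(t)$ is insisted upon, I would rescale it by $(1+\|w\|_2^2)^{-1/2}=1+O(t^2)$ and absorb the extra $O(\varepsilon^2/g_i^2)$ term. That last step is the only place the constant might need adjusting.
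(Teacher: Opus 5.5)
Your argument is correct for the statement as written and gives the same constants $4$ and $12$, but it follows a different route from the paper.

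The paper needs no existence argument. It takes the $i$th eigenvalue of $A(t)$ and gets simplicity and the a priori bound $|\lambda_i(t)-\lambda_i|\le t\|E\|_2$ directly from Weyl's inequality. It then works with a \emph{unit} eigenvector $s_i(t)=a(t)s_i+\underline S_i d(t)$ with $a(t)>0$. This normalization gives the sharper estimate $\|d(t)\|_2\le 2t\|E\|_2/g_i$, compared with your $\|w\|_2\le 4t\|E\|_2/g_i$, and also $a(t)\ge\sqrt3/2$. The eigenvalue bound follows by dividing $a(t)\bigl(\lambda_i(t)-\lambda_i-ts_i^\top Es_i\bigr)=t\,s_i^\top E\underline S_i d(t)$ by $a(t)>1/2$. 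The eigenvector bound comes from $1-a(t)\le\|d(t)\|_2^2\le 4t^2\|E\|_2^2/g_i^2$ together with $\|d(t)-w_1\|_2\le 8t^2\|E\|_2^2/g_i^2$.

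Your intermediate normalization $s_i^\top s_i(t)=1$ avoids both the division by $a(t)$ and the $|1-a(t)|$ term, and yields $4+8=12$ directly, at the cost of the fixed-point machinery. It follows that your inference that the intended normalization is $s_i^\top s_i(t)=1$ is not right. The paper obtains $12$ for the unit-norm eigenvector, which is also the one used later as orthonormal eigenvectors in the CG error expansions. Rescaling your vector, as you suggest, only gives a larger constant from the bounds you have. To get $12$ for the unit vector, run the estimates on $(a(t),d(t))$ directly.

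Two minor remarks:
\begin{enumerate}
\item The existence step is more than you need. $A(t)$ certainly has an eigenvector for its $i$th eigenvalue, and you only have to check that its $s_i$-component is nonzero, which follows from $\|d(t)\|_2\le 1/2$.
\item Your contraction estimate does not close at $t=g_i/(4\|E\|_2)$. The Lipschitz bound your estimates actually give, $2\varepsilon/g_i+4\varepsilon^2(1+\rho)/g_i^2$, equals $1$ there, and the one you wrote equals $3/2$. Either replace $1+\rho$ by $\sqrt{1+\rho^2}$, or rely on the Brouwer fallback you mention.
\end{enumerate}
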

\begin{proof}
We first establish an a priori bound. By Weyl's inequality for symmetric matrices (see, e.g.,~\citep{bhatia2013matrix,horn2012matrix}), $|\lambda_i(t)-\lambda_i|\leq t\,\|E\|_2$ for every $t$. Hence, for $0 \leq t\leq g_i/(2\|E\|_2)$, $|\lambda_i(t)-\lambda_i|\leq g_i/2$. Since, for every $j\neq i$, $|\lambda_i-\lambda_j|\geq g_i$, this gives
\begin{equation}\label{eq:gap_to_unperturbed_spectrum}
|\lambda_i(t)-\lambda_j|
\geq |\lambda_i-\lambda_j|-|\lambda_i(t)-\lambda_i|
\geq \frac{g_i}{2}.
\end{equation}
Consequently,~\eqref{eq:gap_to_unperturbed_spectrum} implies that, for $0\leq t\leq g_i/(2\|E\|_2)$,
{\small
\begin{equation}\label{eq:resolvent_bound_gap}
\bigl\|(\lambda_i(t)I_{n-1}-\underline\Lambda_i)^{-1}\bigr\|_2\leq\frac2{g_i}.
\end{equation}}
Let $0\leq t\leq g_i/(4\|E\|_2)$, we next show that $\lambda_i(t)$ remains simple.  For $j\neq i$,
{\small
\begin{equation*}
|\lambda_i(t)-\lambda_j(t)|
\geq |\lambda_i-\lambda_j|
   -|\lambda_i(t)-\lambda_i|
   -|\lambda_j(t)-\lambda_j|
\geq |\lambda_i-\lambda_j|-2t\|E\|_2
\geq g_i-2t\|E\|_2
\geq \frac{g_i}{2},
\end{equation*}
where we have used in the last inequality that $0\leq t\leq g_i/(4\|E\|_2)$. Thus $\lambda_i(t)$ is simple.} 
We now derive exact equations for the perturbed eigenpair. Let $s_i(t)$ be a unit eigenvector associated with $\lambda_i(t)$ and write $s_i(t)=a(t)s_i+\underline S_i d(t)$.
Its eigenvalue equation is
{\small
\begin{equation}\label{eq:perturbed_eigenpair_equation}
(A+tE)s_i(t)=\lambda_i(t)s_i(t).
\end{equation}}
Multiplying~\eqref{eq:perturbed_eigenpair_equation} by $\underline S_i^\top$ and using
$\underline S_i^\top As_i=0$,
$\underline S_i^\top A\underline S_i=\underline\Lambda_i$, and
$\underline S_i^\top s_i=0$, gives
\begin{equation}\label{eq:d_projected_equation}
\bigl(\lambda_i(t)I_{n-1}-\underline\Lambda_i\bigr)d(t)
=t\,\underline S_i^\top E s_i(t).
\end{equation}
The matrix in~\eqref{eq:d_projected_equation} is invertible by~\eqref{eq:resolvent_bound_gap}, and hence
{\small
\begin{equation}\label{eq:d_exact_formula}
d(t)=t\bigl(\lambda_i(t)I_{n-1}-\underline\Lambda_i\bigr)^{-1}
\underline S_i^\top E s_i(t).
\end{equation}}
Because $\underline S_i$ has orthonormal columns and $\|s_i(t)\|_2=1$, taking norms in~\eqref{eq:d_exact_formula} and applying~\eqref{eq:resolvent_bound_gap} yields,
\begin{equation}\label{eq:d_first_bound}
\|d(t)\|_2
\leq t\frac{2}{g_i}\|\underline S_i^\top E s_i(t)\|_2
\leq \frac{2t\|E\|_2}{g_i}.
\end{equation}
Since $ a(t)^2+\|d(t)\|_2^2=1$, combining~\eqref{eq:d_first_bound}  and  $\frac{2t\|E\|_2}{g_i}\leq\frac12$, gives $|a(t)|\geq\sqrt{3}/2>1/2$.

The eigenvalue and eigenvector gap can be bounded as follows. Multiplying~\eqref{eq:perturbed_eigenpair_equation} by $s_i^\top$ gives
{\small
\begin{equation}\label{eq:eigenvalue_remainder_identity}
a(t)\bigl(\lambda_i(t)-\lambda_i-t\,s_i^\top Es_i\bigr)
=t\,s_i^\top E\underline S_i d(t).
\end{equation}}
Because $|a(t)|>1/2$, combining~\eqref{eq:eigenvalue_remainder_identity} and~\eqref{eq:d_first_bound} gives,
{\small\begin{equation*}\label{eq:eigenvalue_remainder_bound}
\bigl|\lambda_i(t)-\lambda_i-t\,s_i^\top Es_i\bigr|
\leq \frac{1}{|a(t)|} t\|E\|_2\|d(t)\|_2
\leq 2t\|E\|_2\|d(t)\|_2
\leq\frac{4\|E\|_2^2}{g_i}\,t^2.
\end{equation*}}
On the other hand,
\begin{equation}\label{eq:triangular-inequality}
    \bigl\|s_i(t)-s_i
-t\underline S_iR_0\underline S_i^\top Es_i\bigr\|_2
\leq |a(t)-1|
+\bigl\|d(t)-tR_0\underline S_i^\top Es_i\bigr\|_2,
\end{equation}
where $R_0:=\bigl(\lambda_i I_{n-1}-\underline\Lambda_i\bigr)^{-1}$. We bound the two terms on the right-hand side of~\eqref{eq:triangular-inequality} separately.
Choose the sign of $s_i(t)$ so that $a(t)>0$. Since $a(t)^2+\|d(t)\|_2^2=1$,
\begin{equation}\label{eq:a_first_bound}
0\leq1-a(t)
=\frac{\|d(t)\|_2^2}{1+a(t)}
\leq\|d(t)\|_2^2
\leq\frac{4t^2\|E\|_2^2}{g_i^2}.
\end{equation}
Define $R(t):=\bigl(\lambda_i(t)I_{n-1}-\underline\Lambda_i\bigr)^{-1}$.
Using the expression of $s_i(t)$ in~\eqref{eq:d_exact_formula} gives
{\small
\begin{equation}\label{eq:d_exact_expanded}
d(t)=tR(t)\left(a(t)\underline S_i^\top Es_i
+\underline S_i^\top E\underline S_i d(t)\right).
\end{equation}}
Subtracting $tR_0\underline S_i^\top Es_i$ from~\eqref{eq:d_exact_expanded} yields
{\small
\begin{equation}\label{eq:d_remainder_identity}
d(t)-tR_0\underline S_i^\top Es_i
=t\bigl(R(t)-R_0\bigr)\underline S_i^\top Es_i+tR(t)\left((a(t)-1)\underline S_i^\top Es_i
+\underline S_i^\top E\underline S_i d(t)\right).
\end{equation}}
The resolvent identity gives 
%\begin{equation}\label{eq:local_resolvent_identity}
$R(t)-R_0
=\bigl(\lambda_i-\lambda_i(t)\bigr)R(t)R_0$.
%\end{equation}
Since $|\lambda_i - \lambda_i(t)|\leq t\|E\|_2$, $\|R(t)\|_2\leq2/g_i$ and $\|R_0\|_2=1/g_i$, we thus deduce,
{\small
\begin{equation}\label{eq:resolvent_bound}
\|R(t)-R_0\| \leq \frac{2t\|E\|_2}{g_i^2}.
\end{equation}
}
Using the triangular inequality in~\eqref{eq:d_remainder_identity}, and~\eqref{eq:resolvent_bound},~\eqref{eq:d_first_bound} and~\eqref{eq:a_first_bound}, we obtain
{\small
\begin{equation*}\label{eq:d_remainder_bound}
\begin{aligned}
\bigl\|d(t)-tR_0\underline{S}_i^\top E s_i\bigr\|_2
&\leq
t\|R(t)-R_0\|_2\|E\|_2
+t\|R(t)\|_2
\left(
|a(t)-1|\|E\|_2
+\|E\|_2\|d(t)\|_2
\right)\\
&\leq
\frac{2t^2\|E\|_2^2}{g_i^2}
+\frac{8t^3\|E\|_2^3}{g_i^3}
+\frac{4t^2\|E\|_2^2}{g_i^2}\leq
\frac{8t^2\|E\|_2^2}{g_i^2}.
\end{aligned}
\end{equation*}
}
where we have used $\|\underline S_i^\top Es_i\|\leq \|E\|_2$, $\|\underline S_i^\top E\underline S_i\|\leq \|E\|_2$, and the last inequality uses $t\|E\|_2/g_i\leq1/4$.
Combining this together with to~\eqref{eq:triangular-inequality},~\eqref{eq:a_first_bound} and~\eqref{eq:d_remainder_bound} completes the proof.
\end{proof}

To keep the analysis simple, in the remainder of the paper, we make the following assumption on the matrix $A$.
\begin{assumption}\label{ass:simple_spectrum}
The eigenvalues of the matrix $A$ are all simple.
\end{assumption}

\subsection{Deflated CG for the perturbed system}
In this section, we analyze deflated CG on the perturbed system~\eqref{eq:pert_sys}. %We first derive an explicit expression for its energy-norm error, together with first-order expansions of the projected matrix and its nonzero eigenpairs. We then expand the eigenpairs of the preconditioned matrix and relate its residual coefficients to those of deflated CG, which yields a bound on the PCG error in terms of the deflated CG error.

%Let $1 \leq k \leq n-1$,  and $\widebar{\Delta}_k(t)$ and $\widebar Q_k(t)$ be as in~\Cref{Sec:DefCG}. For convenience in the perturbation analysis, we index the nonzero eigenpairs of $\widehat A(t)$ by $i=k+1,\ldots,n$, consistently with the indexing of the corresponding eigenpairs of the reference matrix $A$. The next theorem gives an explicit expression for the energy norm of the error in deflated CG, based on~\Cref{prop:deflatedCGTwoversio} and~\Cref{lemma:CG_xproj_yproj}. 

Let $1\leq k\leq n-1$. For sufficiently small $t\geq0$, let $(\widehat{\lambda}_i(t),\widehat{s}_i(t))$, $i=k+1,\ldots,n$, denote the nonzero eigenpairs of $\widehat{A}(t)=P(t)A(t)$, where the eigenvectors are chosen orthonormal and the pairs are indexed by continuation from the eigenpairs $(\lambda_i,s_i)$ of $A$ at $t=0$. Collecting them, we define
\begin{equation*}
\widebar Q_k(t)=[\widehat{s}_{k+1}(t),\ldots,\widehat{s}_n(t)]
\quad\text{and}\quad
\widebar{\Delta}_k(t)=\diag(\widehat{\lambda}_{k+1}(t),\ldots,\widehat{\lambda}_n(t)),
\end{equation*}
so that $\widehat{A}(t)=\widebar Q_k(t)\,\widebar{\Delta}_k(t)\,\widebar Q_k(t)^\top$. The next theorem gives an explicit expression for the energy-norm error of deflated CG, based on~\Cref{prop:deflatedCGTwoversio} and~\Cref{lemma:CG_xproj_yproj}.

\begin{theorem}\label{thm:err_def_Dk}
Let $1\leq\ell<n-k$ and let $z_0$ be any starting vector. Assume that the deflated CG iterate $x_{\ell}^{[\text{D}]}(t)$ is well defined for sufficiently small $t\geq0$. Then,
\begin{equation*}
   \left\| x^\ast(t) - x_{\ell}^{[\text{D}]}(t)\right\|_{A(t)}^{2} = \sum_{i =k+1}^{n} \dfrac{\widetilde{\zeta}_i(t)^2}{\widehat{\lambda}_i(t)} p_{\ell}^{(t)}(\widehat{\lambda}_i(t))^2,
\end{equation*}
where $\widetilde{\zeta}_i(t):=\widehat{s}_i(t)^{\top}\left(\widehat{b}(t)-\widehat{A}(t) z_0\right)$, $x^\ast(t)$ is the exact solution of~\eqref{eq:pert_sys}, and
\begin{equation*}
\{p_{\ell}^{(t)}\}=\displaystyle\arg \min_{p \in \mathbb{P}_{\ell}(0)}\lVert p (\widebar{\Delta}_k(t) ) {\widebar Q_k(t)}^\top( z^{\ast}(t) - z_0) \rVert_{\widebar{\Delta}_k(t)}^2.
\end{equation*}
\end{theorem}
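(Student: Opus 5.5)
The plan is to apply \Cref{prop:deflatedCGTwoversio} to the perturbed system and then expand the minimizing polynomial in the eigenbasis of $\widebar{\Delta}_k(t)$. Fix $t$ small enough that $A(t)$ is SPD, $S_k^\top A(t)S_k$ is invertible, and the iterate exists. \Cref{prop:deflatedCGTwoversio} is stated for a general SPD matrix and a full-column-rank deflation basis. Applying it with $A(t)$ in place of $A$, $W=S_k$, and the eigen-decomposition $\widehat A(t)=\widebar Q_k(t)\widebar{\Delta}_k(t)\widebar Q_k(t)^\top$ gives
\begin{equation*}
\left\| x^\ast(t) - x_{\ell}^{[\text{D}]}(t)\right\|_{A(t)}^{2}=\min_{p\in\mathbb{P}_\ell(0)}\bigl\|p(\widebar{\Delta}_k(t))\,\widebar Q_k(t)^\top(z^\ast(t)-z_0)\bigr\|^2_{\widebar{\Delta}_k(t)}
=\bigl\|p_\ell^{(t)}(\widebar{\Delta}_k(t))\,\widebar Q_k(t)^\top(z^\ast(t)-z_0)\bigr\|^2_{\widebar{\Delta}_k(t)}.
\end{equation*}
The second equality holds because a minimizer $p_\ell^{(t)}$ exists. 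The minimization is a finite-dimensional least-squares problem over the coefficients, and its value is attained by the CG iterate of \Cref{lemma:CG_xproj_yproj}.

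Next we express the vector $\widebar Q_k(t)^\top(z^\ast(t)-z_0)$ in terms of residual coefficients. By~\eqref{eq:explicite_sol} applied at parameter $t$, $z^\ast(t)=Q_k(t)Q_k(t)^\top z_0+\widebar Q_k(t)\widebar{\Delta}_k(t)^{-1}\widebar Q_k(t)^\top \widehat b(t)$. Using $\widebar Q_k(t)^\top Q_k(t)=0$ and $\widebar Q_k(t)^\top\widebar Q_k(t)=I$, we get
\begin{equation*}
\widebar Q_k(t)^\top(z^\ast(t)-z_0)=\widebar{\Delta}_k(t)^{-1}\widebar Q_k(t)^\top\bigl(\widehat b(t)-\widehat A(t)z_0\bigr).
\end{equation*}
For the last step we write $\widebar Q_k(t)^\top z_0=\widebar{\Delta}_k(t)^{-1}\widebar Q_k(t)^\top\widehat A(t)z_0$, which follows from $\widebar Q_k(t)^\top\widehat A(t)=\widebar{\Delta}_k(t)\widebar Q_k(t)^\top$. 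The $i$th entry of this vector is therefore $\widetilde\zeta_i(t)/\widehat\lambda_i(t)$.

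Finally, $p(\widebar{\Delta}_k(t))$ is diagonal with entries $p(\widehat\lambda_i(t))$. The weighted norm then expands as
\begin{equation*}
\sum_{i=k+1}^n \widehat\lambda_i(t)\,p(\widehat\lambda_i(t))^2\,\frac{\widetilde\zeta_i(t)^2}{\widehat\lambda_i(t)^2},
\end{equation*}
and substituting $p=p_\ell^{(t)}$ gives the claimed identity.

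The main point needing care is the legitimacy of applying the earlier results at each $t$. Since $\widehat A(t)$ depends on $t$, the nonzero eigenvalues must be positive and the labeling by continuation must be consistent. Positivity holds because $P(t)A(t)=P(t)A(t)P(t)^\top$ is positive semidefinite with rank $n-k$. The labeling only reorders the sum, so it does not affect the identity.
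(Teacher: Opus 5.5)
Your proposal is correct and follows essentially the same route as the paper: apply \Cref{prop:deflatedCGTwoversio} (with \Cref{lemma:CG_xproj_yproj}) to the projected perturbed system, rewrite $\widebar Q_k(t)^\top(z^\ast(t)-z_0)$ as $\widebar{\Delta}_k(t)^{-1}\widebar Q_k(t)^\top(\widehat b(t)-\widehat A(t)z_0)$, and expand the $\widebar{\Delta}_k(t)$-weighted norm componentwise. Your remarks on the positivity of the nonzero eigenvalues (via $P(t)A(t)=P(t)A(t)P(t)^\top$) and on the continuation labeling are extra care that the paper leaves implicit.
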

\begin{proof}
By~\Cref{prop:deflatedCGTwoversio} and~\Cref{lemma:CG_xproj_yproj}, applied to~\eqref{eq:projected_system}, we have
{\small
\begin{equation}\label{eq:p_l^t}
\begin{aligned}
\left\|x^\ast(t)-x_{\ell}^{[\text{D}]}(t)\right\|_{A(t)}^2
&=\min_{p\in\mathbb{P}_{\ell}(0)}
\left\|p(\widebar{\Delta}_k(t))\widebar Q_k(t)^\top\left(z^\ast(t)-z_0\right)\right\|_{\widebar{\Delta}_k(t)}^2\\
&=
\left\|p_{\ell}^{(t)}(\widebar{\Delta}_k(t))\widebar Q_k(t)^\top\left(z^\ast(t)-z_0\right)\right\|_{\widebar{\Delta}_k(t)}^2,    
\end{aligned}
\end{equation}}
where $z^\ast(t)$ denotes a solution of~\eqref{eq:projected_system}. The component of $z^\ast(t)$ in $\mathcal{R}(\widebar Q_k(t))$ satisfies
\(
\widebar Q_k(t)^\top z^\ast(t)
=\widebar{\Delta}_k(t)^{-1}\widebar Q_k(t)^\top\widehat{b}(t).
\)
Therefore,
\begin{equation*}
\widebar Q_k(t)^\top\left(z^\ast(t)-z_0\right)
=\widebar{\Delta}_k(t)^{-1}\widebar Q_k(t)^\top\widehat{b}(t)-\widebar Q_k(t)^\top z_0
=\widebar{\Delta}_k(t)^{-1}\widebar Q_k(t)^\top(\widehat{b}(t)-\widehat{A}(t)z_0).
\end{equation*}
where we used $\widehat{A}(t) =\widebar Q_k(t) \widebar{\Delta}_k(t) \widebar Q_k(t)^\top$. Substituting this identity into~\eqref{eq:p_l^t},
{\small\begin{align*}
\left\|x^\ast(t)-x_{\ell}^{[\text{D}]}(t)\right\|_{A(t)}^2
&=\left\|p_{\ell}^{(t)}(\widebar{\Delta}_k(t))\widebar{\Delta}_k(t)^{-1}\widebar Q_k(t)^\top(\widehat{b}(t)-\widehat{A}(t)z_0)\right\|_{\widebar{\Delta}_k(t)}^2\\
&=\sum_{i=k+1}^{n}\frac{\widetilde{\zeta}_i(t)^2}{\widehat{\lambda}_i(t)}
p_{\ell}^{(t)}(\widehat{\lambda}_i(t))^2.
\end{align*}}
\end{proof}    

We now present the asymptotic expansion of \( \widehat{A}(t) = P(t) A(t) \), along with its associated nonzero eigenvalues and corresponding eigenvectors, as well as \( \widehat{b}(t) = P(t) b \).

\begin{lemma}\label[lemma]{lm:Btilde}
Under~\Cref{ass:simple_spectrum}, for sufficiently small $t\geq0$, 
\begin{align*}
&\widehat{A}(t) = \widebar{S}_k \widebar\Lambda_{k} \widebar{S}_k^{\top} + t \widebar{S}_k \widebar{S}_k^{\top} E \widebar{S}_k \widebar{S}_k^{\top} + O(t^2), & \\
&\widehat{\lambda}_i(t) = \lambda_i(t) + O(t^2), & \text{for} \quad i = k+1, \ldots, n, \\
&\widehat{s}_i(t) = s_i + t \underline{S}_i D_i \underline{S}_i^\top E s_i + O(t^2), & \text{for} \quad i = k+1,\ldots, n, \\
&
\widehat{b}(t) = \widebar{S}_k \widebar{S}_k^\top b - t \widebar{S}_k \widebar{S}_k^{\top} E S_k \Lambda^{-1}_k S_k^{\top} b + O(t^2), 
\end{align*}
where $\widebar\Lambda_{k} :=\diag(\lambda_{k+1},\ldots,\lambda_n)$ and $D_i$ is a diagonal matrix such that
{\small\[
[D_i]_{jj} = \begin{cases}
0, \qquad\qquad\quad\; \text{for} \quad j = 1, \ldots, k, \\
1/(\lambda_i - \lambda_j), \quad \text{for} \quad j = k+1, \ldots, n : j\neq i.
\end{cases}
\]}

\end{lemma}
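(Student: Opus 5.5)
We first expand the projection $P(t)$. Since $S_k$ consists of eigenvectors of $A$, we have $S_k^\top V(t)=S_k^\top A S_k+tS_k^\top ES_k=\Lambda_k+tS_k^\top ES_k$. A Neumann series then gives
\[
(S_k^\top V(t))^{-1}=\Lambda_k^{-1}-t\Lambda_k^{-1}S_k^\top ES_k\Lambda_k^{-1}+O(t^2).
\]
Combining this with $V(t)=S_k\Lambda_k+tES_k$, we obtain
\[
P(t)=I_n-S_kS_k^\top-t\bigl(ES_k\Lambda_k^{-1}S_k^\top-S_kS_k^\top ES_k\Lambda_k^{-1}S_k^\top\bigr)+O(t^2)
=\widebar S_k\widebar S_k^\top-t\,\widebar S_k\widebar S_k^\top ES_k\Lambda_k^{-1}S_k^\top+O(t^2),
\]
where we used $I_n-S_kS_k^\top=\widebar S_k\widebar S_k^\top$. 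Multiplying by $b$ gives the expansion of $\widehat b(t)$ directly.

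Next we multiply $P(t)$ by $A(t)=A+tE$. Since $S_k^\top A=\Lambda_kS_k^\top$, the term $-t\widebar S_k\widebar S_k^\top ES_k\Lambda_k^{-1}S_k^\top A$ equals $-t\widebar S_k\widebar S_k^\top ES_kS_k^\top$. The remaining first-order term is $t\widebar S_k\widebar S_k^\top E$. Together these give $t\widebar S_k\widebar S_k^\top E(I_n-S_kS_k^\top)=t\widebar S_k\widebar S_k^\top E\widebar S_k\widebar S_k^\top$, and the zeroth-order term is $\widebar S_k\widebar S_k^\top A=\widebar S_k\widebar\Lambda_k\widebar S_k^\top$. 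This is the first claim. Note that the first-order coefficient is symmetric, which is consistent with $\widehat A(t)$ being symmetric.

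For the eigenpairs, write $\widehat A(t)=B+tF+O(t^2)$ with $B=\widebar S_k\widebar\Lambda_k\widebar S_k^\top$ and $F=\widebar S_k\widebar S_k^\top E\widebar S_k\widebar S_k^\top$. The eigenpairs of $B$ are $(\lambda_i,s_i)$ for $i>k$, and $(0,s_j)$ for $j\le k$. We apply the first-order formula~\eqref{eq:FirstOrder}, or more rigorously the argument of \Cref{prop:gap_bound} with the $O(t^2)$ term absorbed, to $B$ and $F$.
\begin{itemize}
\item Eigenvalues: since $s_i^\top Fs_i=s_i^\top Es_i$ for $i>k$, we get $\widehat\lambda_i(t)=\lambda_i+ts_i^\top Es_i+O(t^2)=\lambda_i(t)+O(t^2)$ by \Cref{prop:gap_bound}.
\item Eigenvectors: the correction is $t\sum_{j\ne i}\mu_j^{-1}s_js_j^\top Fs_i$, where $\mu_j$ is the gap between $\lambda_i$ and the $j$th eigenvalue of $B$. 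For $j\le k$ we have $s_j^\top F=0$, since $\widebar S_k^\top s_j=0$, so those terms vanish. For $j>k$ we have $s_j^\top Fs_i=s_j^\top Es_i$ and the gap is $\lambda_i-\lambda_j$. This is exactly $t\underline S_iD_i\underline S_i^\top Es_i$ with the stated $D_i$.
\end{itemize}

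The main obstacle is justifying the first-order eigen-expansion, because the eigenvalues of $B$ need not be simple in the required sense: the eigenvalue $0$ has multiplicity $k$, and some $\lambda_i$ with $i>k$ could coincide with it only if $\lambda_i=0$, which is impossible since $A$ is SPD. \Cref{ass:simple_spectrum} ensures that each $\lambda_i$ with $i>k$ is simple as an eigenvalue of $B$, with gap $\min(\lambda_i,\min_{j>k,j\ne i}|\lambda_i-\lambda_j|)>0$. The multiple eigenvalue $0$ is irrelevant for the pairs with $i>k$, since simplicity is only needed at $\lambda_i$. I would redo the proof of \Cref{prop:gap_bound} with $tE$ replaced by the smooth perturbation $\widehat A(t)-B=tF+O(t^2)$. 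The same projected eigen-equations go through, and the extra $O(t^2)$ contributions only affect the remainders. Continuity of indexing then identifies these pairs with the $\widehat s_i(t)$, and orthonormality can be fixed by choosing signs.
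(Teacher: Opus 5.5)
Your proposal is correct and follows essentially the same route as the paper. Both arguments make the same first-order expansion of $(S_k^\top V(t))^{-1}$ and $P(t)$, multiply by $A(t)$ and $b$, and then apply first-order eigenpair perturbation to $\widebar S_k\widebar\Lambda_k\widebar S_k^\top$, using $s_j^\top F=0$ for $j\le k$ to obtain $D_i$. Your extra care about the $k$-fold zero eigenvalue, and about rerunning the remainder argument of \Cref{prop:gap_bound} for the nonlinear perturbation $tF+O(t^2)$, just makes explicit what the paper covers by citing the analyticity of $\widehat A(t)$ and standard perturbation theory.
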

\begin{proof}
We first expand the projector $P(t)$. Since
\(S_k^\top V(t)=S_k^\top (A+tE)S_k=
\Lambda_k+tS_k^\top E S_k\),
a first-order expansion of the inverse gives
\begin{equation}
\label{eq:1st_order_inverse}
(S_k^\top V(t))^{-1}
=\Lambda_k^{-1}-t\Lambda_k^{-1}S_k^\top E S_k\Lambda_k^{-1}+O(t^2).
\end{equation}
Using this expansion in the definition of $P(t)$ yields
\begin{align*}
P(t)
&=I_n-V(t)(S_k^\top V(t))^{-1}S_k^\top 
\nonumber\\
&=I_n-(S_k\Lambda_k+tES_k)
  (\Lambda_k^{-1}-t\Lambda_k^{-1}S_k^\top E S_k\Lambda_k^{-1})S_k^\top+O(t^2)
\nonumber\\
&=I_n-S_kS_k^\top
  +tS_kS_k^\top E S_k\Lambda_k^{-1}S_k^\top
  -tES_k\Lambda_k^{-1}S_k^\top+O(t^2)
\nonumber\\
&=\widebar S_k\widebar S_k^\top
  -t(I_n-S_kS_k^\top)ES_k\Lambda_k^{-1}S_k^\top+O(t^2).
\nonumber
\end{align*}
Hence, $P(t) =\widebar S_k\widebar S_k^\top -t\widebar S_k\widebar S_k^\top ES_k\Lambda_k^{-1}S_k^\top+O(t^2)$.
We now use the expression of $P(t)$ to expand $\widehat{A}(t)$ and $\widehat{b}(t)$,
\begin{align*}
\widehat{A}(t)
=P(t)A(t)&=
\widebar S_k\widebar S_k^\top(A+tE)
-t\widebar S_k\widebar S_k^\top ES_k\Lambda_k^{-1}S_k^\top(A+tE)+O(t^2)\\
&=
\widebar S_k\widebar\Lambda_{k}\widebar S_k^\top
+t\widebar S_k\widebar S_k^\top E(I_n-S_kS_k^\top)+O(t^2)\\
&=
\widebar S_k\widebar\Lambda_{k}\widebar S_k^\top
+t\widebar S_k\widebar S_k^\top E\widebar S_k\widebar S_k^\top+O(t^2).
\end{align*}
Similarly,
$
\widehat{b}(t)
=P(t)b=
%(\widebar S_k\widebar S_k^\top -t\widebar S_k\widebar S_k^\top ES_k\Lambda_k^{-1}S_k^\top+O(t^2))b\\&=
\widebar S_k\widebar S_k^\top b
-t\widebar S_k\widebar S_k^\top ES_k\Lambda_k^{-1}S_k^\top b+O(t^2).
$

%This proves the expansions for $\widehat{A}(t)$ and $\widehat{b}(t)$.
Regarding the nonzero eigenvalues of $\widehat{A}(t)$, the matrix $\widehat A(t)=P(t)A(t)$, is analytic for all sufficiently small $t$. Standard perturbation theory  therefore applies to each simple nonzero eigenvalue of $\widehat A(0)$. In particular, the $O(t^2)$ term in the matrix expansion above does not affect the first-order eigenvalue and eigenvector coefficients. The unperturbed matrix
\(
\widehat{A}(0)=\widebar S_k\widebar\Lambda_{k}\widebar S_k^\top
\)
has, by~\Cref{ass:simple_spectrum}, $n-k$ simple eigenpairs $(\lambda_i,s_i)$ for $i=k+1,\ldots,n$. Applying the first-order perturbation~\eqref{eq:FirstOrder} to $\widehat{A}(t)$
gives, for $i=k+1,\ldots,n$,
\begin{align*}
\widehat{\lambda}_i(t)
&=\lambda_i+t s_i^\top \widebar S_k\widebar S_k^\top E\widebar S_k\widebar S_k^\top s_i+O(t^2)=\lambda_i+t s_i^\top E s_i+O(t^2)
=\lambda_i(t)+O(t^2),
\end{align*}
where we used $\widebar S_k\widebar S_k^\top s_i=s_i$.
Let \(
W_i=\diag(\underbrace{0,\ldots,0}_{k \text{ times}},\lambda_{k+1},\ldots,\lambda_{i-1},\lambda_{i+1},\ldots,\lambda_n)
\), then the same first-order perturbation formula for the eigenvectors gives
{\small
\begin{align*}
\widehat{s}_i(t)
&=s_i+t\underline S_i
(\lambda_i I_{n-1}- W_i)^{-1}
\underline S_i^\top\widebar S_k\widebar S_k^\top E\widebar S_k\widebar S_k^\top s_i+O(t^2)=s_i+t\underline S_iD_i\underline S_i^\top E s_i+O(t^2).
\end{align*}
}
%where\(vW_i=\diag(\underbrace{0,\ldots,0}_{k \text{ times}},\lambda_{k+1},\ldots,\lambda_{i-1},\lambda_{i+1},\ldots,\lambda_n)\). 
\end{proof}  

\subsection{Perturbation analysis of the preconditioned matrix}

The perturbation expansions for the eigenvectors of $U^\theta A(t)U^\theta$ involve the factors $1/(\lambda_i-\theta)$ for $i=k+1,\ldots,n$, so obtaining estimates that are uniform in the scaling parameter requires $\theta$ to stay a fixed positive distance away from the nondeflated eigenvalues. We formalize this through the following set.

\begin{definition}\label{ass:scalling}
Let $\gamma_0$ be the smallest spectral gap $
\gamma_0 := \min_{1\leq i\leq n-1}(\lambda_i-\lambda_{i+1})$ which is positive by~\Cref{ass:simple_spectrum}. For a fixed $k\in\{1,\ldots,n-1\}$ and $\delta>0$, define
\begin{equation}\label{eq:Theta_delta}
\Theta_\delta
:=\left\{\theta\in[\lambda_n,\lambda_k] \mid
\min_{i=k+1,\ldots,n}|\theta-\lambda_i|\geq\delta\right\}.
\end{equation}
\end{definition}

Note that, since the eigenvalues of $A$ are ordered decreasingly,
\[
\min_{i=k+1,\ldots,n}|\lambda_k-\lambda_i|
=\lambda_k-\lambda_{k+1}\geq\gamma_0.
\]
Thus, $\lambda_k\in\Theta_{\gamma_0}$. Decreasing $\delta$ relaxes the separation condition, so $\Theta_{\gamma_0}\subseteq\Theta_\delta$ whenever $0<\delta\leq\gamma_0$. Hence, $\lambda_k\in\Theta_\delta$ and $\Theta_\delta$ is nonempty.
%\end{proof}
We henceforth fix $0<\delta\leq\gamma_0$ and choose $\theta\in\Theta_\delta$, ensuring a distance of at least $\delta$ from $\lambda_{k+1},\ldots,\lambda_n$.

Throughout the remainder of the paper, an estimate $R(t,\theta)=O(t^2)$ stated uniformly for $\theta\in\Theta_\delta$ means that there exist constants $C>0$ and $\hat{t}>0$, both independent of $\theta$, such that
\[
\|R(t,\theta)\|\leq C \, t^2,
\quad
\text{for every }\theta\in\Theta_\delta\text{ and } 0\leq t\leq \hat{t}.
\]

We now turn to the asymptotic analysis of the eigenvalues and eigenvectors of the preconditioned matrix $U^{\theta}A(t)U^{\theta}$. For conciseness, set
\[
A^\theta(t):=U^\theta A(t)U^\theta,
\qquad
\lambda_i^\theta(t):=\lambda_i(A^\theta(t)),
\qquad
s_i^\theta(t):=s_i(A^\theta(t)).
\]
In this analysis, eigenvalue indices denote local perturbation branches rather than decreasing numerical order: indices $1,\ldots,k$ label the branch issued from the eigenvalue $\theta$, while each index $i=k+1,\ldots,n$ labels the branch issued from $\lambda_i$. %We apply~\Cref{prop:gap_bound} to the latter simple eigenvalues with $A$ replaced by $A^{\theta}(0) := U^\theta AU^\theta$; the eigenvalue $\theta$ of multiplicity $k$ is handled by Weyl's inequality.

\begin{lemma}\label{lem:lambdatilda}
Let~\Cref{ass:simple_spectrum} hold and fix $\delta \in (0,\gamma_0]$. Let $1\leq k\leq n-1$. For sufficiently small $t\geq0$, the eigenpairs of the preconditioned matrix indexed by $i=k+1,\ldots,n$ admit the following expansions, uniformly in $\theta\in\Theta_\delta$,
\begin{equation*}
\lambda_{i}^\theta(t) = \lambda_{i}(t) + O(t^2), \qquad
s_i^\theta(t) = s_i + t \underline{S}_i T_i^\theta \underline{S}_i^\top E s_i + O(t^2),
\end{equation*}
where $T_i^\theta$ is an $(n-1)\times(n-1)$ diagonal matrix such that
{\small
 $$[T_i^\theta]_{jj} = \begin{cases}
\sqrt{\theta}/(\sqrt{\lambda_j}(\lambda_i - \theta)), \quad \text{for}\quad j = 1, \ldots, k \\
1/(\lambda_i - \lambda_j), \quad \text{for}\quad j = k+1, \ldots, n : j\neq i.
\end{cases}$$}
\end{lemma}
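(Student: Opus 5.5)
The plan is to compute the unperturbed preconditioned matrix and its first-order perturbation explicitly, and then apply the quantitative perturbation result \Cref{prop:gap_bound} to each branch $i=k+1,\ldots,n$. Uniformity in $\theta$ will come from the separation constant $\delta$.

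\textbf{The unperturbed matrix.} Since $U^\theta$ is symmetric with $U^\theta s_j=\sqrt{\theta/\lambda_j}\,s_j$ for $j\leq k$ and $U^\theta s_j=s_j$ for $j>k$, we have
\[
A^\theta(0)=U^\theta A U^\theta=\theta S_kS_k^\top+\widebar S_k\widebar\Lambda_k\widebar S_k^\top .
\]
This matrix has the same eigenvectors $s_1,\ldots,s_n$ as $A$. Its eigenvalues are $\theta$ with multiplicity $k$, and $\lambda_{k+1},\ldots,\lambda_n$. The perturbation is linear in $t$:
\[
A^\theta(t)=A^\theta(0)+tU^\theta E U^\theta ,
\]
with no $O(t^2)$ term. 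Moreover $\|U^\theta\|_2=\max(1,\max_{j\leq k}\sqrt{\theta/\lambda_j})\leq 1$, because $\theta\leq\lambda_k\leq\lambda_j$. Hence $\|U^\theta EU^\theta\|_2\leq\|E\|_2\leq1$, uniformly in $\theta$.

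\textbf{Gap, and applying \Cref{prop:gap_bound}.} For $i\geq k+1$, the eigenvalue $\lambda_i$ of $A^\theta(0)$ is simple. Its gap satisfies
\[
g_i^\theta=\min\Bigl(\min_{j>k,\,j\neq i}|\lambda_i-\lambda_j|,\ |\lambda_i-\theta|\Bigr)\geq\min(\gamma_0,\delta)=\delta ,
\]
since $\theta\in\Theta_\delta$. Note that \Cref{prop:gap_bound} only uses the simplicity of the eigenvalue being tracked, so the multiplicity of $\theta$ is harmless. Applying it with $A$ replaced by $A^\theta(0)$ and $E$ replaced by $U^\theta EU^\theta$ gives, for $t\leq\delta/4$, remainders bounded by $4t^2/\delta$ and $12t^2/\delta^2$. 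These constants are independent of $\theta$, which is exactly the required uniformity.

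\textbf{Identifying the first-order terms.} Two computations remain.
\begin{itemize}
\item Eigenvalue: $s_i^\top U^\theta EU^\theta s_i=s_i^\top Es_i$, since $U^\theta s_i=s_i$ for $i>k$. By \Cref{prop:gap_bound} applied to $A$ itself (gap $\geq\gamma_0$), $\lambda_i(t)=\lambda_i+ts_i^\top Es_i+O(t^2)$. Hence $\lambda_i^\theta(t)=\lambda_i(t)+O(t^2)$, uniformly in $\theta$.
\item Eigenvector: the first-order term is $t\,\underline S_i(\lambda_iI-\underline\Lambda_i^\theta)^{-1}\underline S_i^\top U^\theta EU^\theta s_i$, where $\underline\Lambda_i^\theta$ has entries $\theta$ for $j\leq k$ and $\lambda_j$ for $j>k$, $j\neq i$. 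Using $U^\theta s_i=s_i$ and $\underline S_i^\top U^\theta=\diag(\sqrt{\theta/\lambda_j}\ (j\leq k),\ 1\ (j>k))\,\underline S_i^\top$, the product of the two diagonal factors is exactly $T_i^\theta$.
\end{itemize}
This yields the stated expansion of $s_i^\theta(t)$.

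\textbf{Main obstacle.} The delicate point is the bookkeeping of uniformity and branch labeling. One must check that $t\leq\min(\delta,\gamma_0)/4=\delta/4$ serves simultaneously for all $i$ and all $\theta\in\Theta_\delta$. One must also check that the branch produced by \Cref{prop:gap_bound} (the unique eigenvalue within $g_i^\theta/2$ of $\lambda_i$, with sign fixed by $s_i^\top s_i^\theta(t)>0$) coincides with the labeling convention adopted before the lemma.
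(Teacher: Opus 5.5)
Your proposal is correct and follows essentially the same route as the paper. You write $A^\theta(0)$ in the eigenbasis of $A$, obtain $T_i^\theta$ from $U^\theta s_i=s_i$ and $\underline S_i^\top U^\theta=\diag(\cdot)\,\underline S_i^\top$, and get uniformity in $\theta$ from \Cref{prop:gap_bound} with the common gap $\delta$ and $\|U^\theta EU^\theta\|_2\le 1$; you are merely more explicit about the radius $\delta/4$, the constants $4/\delta$ and $12/\delta^2$, and the irrelevance of the multiplicity of $\theta$.
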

\begin{proof}
Since
\(
A^\theta(t)=U^{\theta}AU^{\theta}+tU^{\theta}EU^{\theta}
\), the first-order perturbation~\eqref{eq:FirstOrder} gives
\begin{align*}
\lambda_i^\theta(t)
&=\lambda_i^\theta(0)
+t s_i^\top U^{\theta}EU^{\theta}s_i+O(t^2)
=\lambda_i+t s_i^\top E s_i+O(t^2)
= \lambda_i(t) + O(t^2),
\end{align*}
where we used $U^{\theta}s_i=s_i$ and $\lambda_i^\theta(0)=\lambda_i$ for $i=k+1,\ldots,n$. Applying the eigenvector expansion gives,
\begin{equation}\label{eq:s(UAU)}
s_i^\theta(t)
=s_i+t\underline S_i
\left(\lambda_i I_{n-1}-\Omega_i\right)^{-1}
\underline S_i^\top U^{\theta}EU^{\theta}s_i+O(t^2),
\end{equation}
where
\(
\Omega_i
=\diag(\underbrace{\theta,\ldots,\theta}_{k \text{ times}},
\lambda_{k+1},\ldots,\lambda_{i-1},\lambda_{i+1},\ldots,\lambda_n).
\)
Using $U^{\theta}s_i=s_i$ and 
\(
U^{\theta}\underline S_i
=\underline S_i\diag(
\frac{\sqrt{\theta}}{\sqrt{\lambda_1}},\ldots,
\frac{\sqrt{\theta}}{\sqrt{\lambda_k}},
\underbrace{1,\ldots,1}_{n-k-1 \text{ times}}),
\)
~\eqref{eq:s(UAU)} becomes $s_i^\theta(t)
=s_i+t\underline S_iT_i^\theta\underline S_i^\top E s_i+O(t^2)$.

For every $i>k$ and every $\theta\in\Theta_\delta$, the distance from $\lambda_i$ to the remaining spectrum of $U^\theta AU^\theta$ is at least $\delta$. Moreover, $\|U^\theta E U^\theta\|_2
\leq \|U^\theta\|_2^2\|E\|_2
\leq 1$.
Hence~\Cref{prop:gap_bound}, applied with the common gap $\delta$, provides a common perturbation radius and $\theta$-independent constants in the eigenvalue and eigenvector remainders.
\end{proof}

\subsection{Comparison between PCG and deflated CG}
This section presents the main comparison between the energy-norm errors of PCG and deflated CG. To connect the two error representations, we first compare their residual coefficients and then establish spectral bounds used in the main comparison theorem.

Throughout this section, we use the following assumptions.

\begin{assumption}\label{ass:z_0}
The initial guess $z_0$ when solving~\eqref{eq:projected_system} is chosen such that $S_k^\top r_0=0$, with $r_0 = b - A z_0$. A possible choice is $z_0=S_k\Lambda_k^{-1}S_k^\top b$.
\end{assumption}

\begin{assumption}\label{ass:y_0}
The initial guess for the preconditioned system~\eqref{eq:precond_pert_sys} is chosen such that $z_0 =U^\theta y_0$.
\end{assumption}

\begin{lemma}\label{lm:eta_beta}
Let~\Cref{ass:simple_spectrum} hold and fix $\delta \in (0,\gamma_0]$. Let $1\leq k \leq n-1$. For $i\in\{k+1,\ldots,n\}$, define
\begin{align*}
\widetilde{\zeta}_i(t) :=\widehat s_i(t)^\top
\bigl(\widehat b(t)-\widehat A(t)z_0\bigr)\quad \text{and} \quad \widetilde{\beta}_i^\theta(t) :=s_i^\theta(t)^\top
\bigl(U^\theta b-A^\theta(t) y_0\bigr).
\end{align*}
Then, under~\Cref{ass:y_0,ass:z_0}, for a $t\geq0$ sufficiently small, 
$\widetilde{\zeta}_i(t)^2=\widetilde{\beta}_i^\theta(t)^2+O(t^2)
$ uniformly in $\theta\in\Theta_\delta$. 
\end{lemma}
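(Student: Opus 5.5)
The plan is to show the stronger statement $\widetilde{\zeta}_i(t)=\widetilde{\beta}_i^\theta(t)+O(t^2)$ uniformly in $\theta\in\Theta_\delta$. Squaring then gives the claim, because both quantities are bounded uniformly for small $t$: their zeroth-order term is $s_i^\top r_0$ and the first-order coefficients are bounded, using $\|U^\theta\|_2\leq 1$ and $|\lambda_i-\theta|\geq\delta$. So I will expand both coefficients to first order in $t$ and compare them term by term. The key input is that \Cref{ass:z_0} gives $S_k^\top r_0=0$. Hence $r_0=\widebar S_k\widebar S_k^\top r_0$, $U^\theta r_0=r_0$, and $S_k^\top b=\Lambda_kS_k^\top z_0$.

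\emph{Deflated coefficient.} Using \Cref{lm:Btilde}, I will write
\[
\widehat b(t)-\widehat A(t)z_0=\widebar S_k\widebar S_k^\top(b-Az_0)-t\,\widebar S_k\widebar S_k^\top E\bigl(S_k\Lambda_k^{-1}S_k^\top b+\widebar S_k\widebar S_k^\top z_0\bigr)+O(t^2).
\]
Substituting $S_k\Lambda_k^{-1}S_k^\top b=S_kS_k^\top z_0$ reduces this to $r_0-t\,\widebar S_k\widebar S_k^\top Ez_0+O(t^2)$. I then multiply by $\widehat s_i(t)^\top=s_i^\top+t\,s_i^\top E\underline S_iD_i\underline S_i^\top+O(t^2)$ and use $s_i^\top\widebar S_k\widebar S_k^\top=s_i^\top$ for $i>k$. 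This should give
\[
\widetilde{\zeta}_i(t)=s_i^\top r_0+t\bigl(s_i^\top E\underline S_iD_i\underline S_i^\top r_0-s_i^\top Ez_0\bigr)+O(t^2).
\]

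\emph{Preconditioned coefficient.} By \Cref{ass:y_0}, $U^\theta y_0=z_0$. Therefore
\[
U^\theta b-A^\theta(t)y_0=U^\theta(b-Az_0)-tU^\theta Ez_0=r_0-tU^\theta Ez_0,
\]
and this is exact, with no remainder. Multiplying by the expansion of $s_i^\theta(t)$ from \Cref{lem:lambdatilda} and using $s_i^\top U^\theta=s_i^\top$ for $i>k$, I expect
\[
\widetilde{\beta}_i^\theta(t)=s_i^\top r_0+t\bigl(s_i^\top E\underline S_iT_i^\theta\underline S_i^\top r_0-s_i^\top Ez_0\bigr)+O(t^2).
\]
This remainder is uniform in $\theta$, because the remainder in \Cref{lem:lambdatilda} is uniform and $\|r_0-tU^\theta Ez_0\|$ is bounded independently of $\theta$.

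\emph{Matching the first-order terms.} The only difference between the two expansions is $D_i$ versus $T_i^\theta$. These diagonal matrices agree on the entries $j=k+1,\ldots,n$ and differ only on $j=1,\ldots,k$. However, the vector $\underline S_i^\top r_0$ has zero entries in positions $j\leq k$, precisely because $S_k^\top r_0=0$. Hence $D_i\underline S_i^\top r_0=T_i^\theta\underline S_i^\top r_0$, so the first-order coefficients coincide and the difference is $O(t^2)$ uniformly in $\theta$.

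The main obstacle I anticipate is bookkeeping rather than depth. I must verify that the $O(t)$ parts of $P(t)$ acting on $b$ and on $z_0$ combine correctly, so that $\widehat b-\widehat Az_0$ has first-order term exactly $-\widebar S_k\widebar S_k^\top Ez_0$. I must also check that the sign and normalization conventions of $\widehat s_i(t)$ and $s_i^\theta(t)$ are consistent, though the squares make the sign irrelevant.
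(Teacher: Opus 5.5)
Your proposal is correct and follows essentially the same route as the paper. You expand both coefficients to first order via \Cref{lm:Btilde} and \Cref{lem:lambdatilda}, then use $S_k^\top r_0=0$ (the entries of $\underline S_i^\top r_0$ vanish for $j\leq k$) to make the first-order terms coincide uniformly in $\theta\in\Theta_\delta$. The only difference is organizational: you apply $S_k^\top r_0=0$ before squaring, which gives the slightly stronger $\widetilde\zeta_i(t)=\widetilde\beta_i^\theta(t)+O(t^2)$, whereas the paper squares first and then shows that the $t$-coefficient of $\widetilde\beta_i^\theta(t)^2-\widetilde\zeta_i(t)^2$, namely $2\eta_i\, s_i^\top ES_k(\widehat D_i^\theta+\Lambda_k^{-1})S_k^\top r_0$, vanishes.
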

\begin{proof}
Using~\Cref{lm:Btilde},
{\small
\begin{flalign*}
& \widetilde{\zeta}_i(t) = \widehat{s}_i(t)^{\top}(\widehat{b}(t)-\widehat{A}(t) z_0) &&\\
& ~= (s_i + t \underline{S}_{i} D_i  \underline{S}_{i}^\top E s_i)^\top
  (\widebar{S}_k\widebar{S}_k^{\top} r_0
  -t\widebar{S}_k\widebar{S}_k^{\top}ES_k\Lambda^{-1}_kS_k^{\top}b
  -t\widebar{S}_k\widebar{S}_k^{\top}E\widebar{S}_k\widebar{S}_k^{\top}z_0)
  +O(t^2) &&\\
& ~= s_i^\top r_0 + t\left( s_i^\top E \underline{S}_{i} D_i  \underline{S}_{i}^\top r_0 - s_i^\top E S_k \Lambda^{-1}_k S_k^{\top} b - s_i^\top E \widebar{S}_k \widebar{S}_k^{\top} z_0\right) + O(t^2) &&\\
& ~= s_i^\top r_0 + t\left( s_i^\top E \underline{S}_{i} D_i  \underline{S}_{i}^\top r_0 - s_i^\top E S_k S_k^\top x^\ast - s_i^\top E \widebar{S}_k \widebar{S}_k^{\top} z_0\right) + O(t^2). &&
\end{flalign*}
}
Set $\eta_i:=s_i^\top r_0$. Then, squaring the expression  of $\widetilde{\zeta}_i(t)$ gives
{\small
\begin{equation}\label{eq:eta^2}
    \widetilde{\zeta}_i(t)^2 = \eta_i^2 + 2 t \eta_i \left( s_i^\top E \underline{S}_{i} D_i  \underline{S}_{i}^\top r_0 - s_i^\top E S_k S_k^\top x^\ast - s_i^\top E \widebar{S}_k \widebar{S}_k^{\top} z_0\right) + O(t^2).
\end{equation}}
Let $\widehat{T}_i^{\theta}$ be an $(n-1)\times (n-1)$ diagonal matrix such that
{\small
\begin{equation*}
    [\widehat{T}_i^{\theta}]_{jj} = \begin{cases}
\theta/(\lambda_j(\lambda_i - \theta)), \quad \text{for}\quad j = 1, \ldots, k, \\
1/(\lambda_i - \lambda_j), \qquad\; \text{for}\quad j = k+1, \ldots, n : j\neq i.
\end{cases}
\end{equation*}}
Then, 
by~\Cref{ass:y_0}, the preconditioned initial residual satisfies
\[
U^\theta b-A^\theta(t) y_0
=U^\theta\bigl(b-A(t)z_0\bigr)
=U^\theta r_0-tU^\theta Ez_0.
\]
Using this identity, the expression for $\widehat{T}_i^{\theta}$, and~\Cref{lem:lambdatilda}, we obtain
{\small
\begin{align*}
         \widetilde{\beta}_i^\theta(t) &= s_i^\theta(t)^{\top}
         \bigl(U^\theta b-A^\theta(t) y_0\bigr)\\
         &= \left(s_i + t \underline{S}_{i} T_i^\theta  \underline{S}_{i}^\top E s_i + O(t^2)\right)^\top \left(U^\theta r_0 - t U^\theta 
 E z_0\right)\\
         &= s_i^\top r_0 + t\left( s_i^\top E \underline{S}_{i} \widehat{T}_i^{\theta}  \underline{S}_{i}^\top r_0 - s_i^\top E z_0\right) + O(t^2)\\
         &= s_i^\top r_0 + t\left( s_i^\top E \underline{S}_{i} \widehat{T}_i^{\theta}  \underline{S}_{i}^\top r_0 - s_i^\top E S_k S_k^\top z_0 - s_i^\top E \widebar{S}_k \widebar{S}_k^\top z_0\right) + O(t^2).
\end{align*}}
Squaring this expression gives
\begin{equation}\label{eq:beta^2}
    \widetilde{\beta}_i^\theta(t)^2 = \eta_i^2 + 2 t \eta_i \left(s_i^\top E \underline{S}_{i} \widehat{T}_i^{\theta}  \underline{S}_{i}^\top r_0 - s_i^\top E S_k S_k^\top z_0 - s_i^\top E \widebar{S}_k \widebar{S}_k^\top z_0\right) + O(t^2).
\end{equation}
From~\eqref{eq:eta^2} and~\eqref{eq:beta^2},
{\small
\begin{align*}
    \widetilde{\beta}_i^\theta(t)^2 - \widetilde{\zeta}_i(t)^2 &= 2 t\eta_i \left(s_i^\top E \underline{S}_i (\widehat{T}_i^{\theta} - D_i)\underline{S}_i^\top r_0 + s_i^\top ES_k S_k^\top (x^\ast - z_0)\right)+O(t^2)\\
    &=2 t\eta_i \left(s_i^\top E \underline{S}_i (\widehat{T}_i^{\theta} - D_i)\underline{S}_i^\top r_0 + s_i^\top ES_k S_k^\top A^{-1} r_0\right)+O(t^2)\\
    &=2 t\eta_i \left(s_i^\top E S_k \widehat{D}_i^{\theta} S_k^\top r_0 + s_i^\top ES_k \Lambda_{k}^{-1}  S_k^\top r_0\right)+O(t^2)\\
    &=2 t\eta_i \left(s_i^\top E S_k \left(\widehat{D}_i^{\theta} + \Lambda_{k}^{-1}\right)S_k^\top r_0 \right)+O(t^2),
\end{align*}}
where we used
\(
\underline{S}_i(\widehat{T}_i^{\theta}-D_i)\underline{S}_i^\top
=S_k\widehat{D}_i^{\theta}S_k^\top,
\) with \(
\widehat{D}_i^{\theta}
:=\diag\left(\frac{\theta}{\lambda_j(\lambda_i-\theta)}\right)_{j=1,\ldots,k}.
\)
By~\Cref{ass:z_0}, $S_k^\top r_0=0$, so the entire coefficient of $t$ vanishes. It remains to verify that the resulting $O(t^2)$ estimate is uniform in $\theta$.  In fact, first,~\Cref{ass:scalling} gives $\lambda_n\leq\theta\leq\lambda_k$ and $|\lambda_i-\theta|\geq\delta$ for $i=k+1,\ldots,n$.
Consequently, for any $j\leq k$, $\left|
\frac{\theta}{\lambda_j(\lambda_i-\theta)}
\right|
\leq \frac{1}{\delta}$, and $
\left|
\frac{\sqrt{\theta}}{\sqrt{\lambda_j}(\lambda_i-\theta)}
\right|
\leq \frac{1}{\delta}$,
because $\theta\leq\lambda_k\leq\lambda_j$. Thus, the matrices $\widehat T_i^\theta$ and $T_i^\theta$ are uniformly bounded for any $\theta\in\Theta_\delta$. Second, $\|U^\theta\|_2\leq1$ uniformly. Third, the second-order eigenvector remainder in~\Cref{lem:lambdatilda} is uniform over $\Theta_\delta$. All factors produced when the two expansions are multiplied and squared are therefore bounded independently of $\theta$. Moreover, by~\Cref{prop:gap_bound} and the argument in~\Cref{lem:lambdatilda}, the threshold underlying the eigenvector remainder is independent of $\theta$; hence the threshold for the resulting $O(t^2)$ estimate is independent of $\theta$.
\end{proof}

The following lemma is useful for the proof of the main result.
\begin{lemma}\label{lm:spectral_intervals}
Let~\Cref{ass:simple_spectrum} hold and fix $\delta \in (0,\gamma_0]$. Let $1\leq k\leq n-1$ and $t_0>0$, such that $t_0\|E\|_2<\lambda_n$.
Then for every $0\leq t\leq t_0$:
\begin{enumerate}
\item[(i)] $[\lambda_n-t_0\|E\|_2,\lambda_{k+1}+t_0\|E\|_2]$ contains the nonzero spectrum of $\widehat A(t)$;
\item[(ii)] $[\lambda_n-t_0\|E\|_2,\lambda_{k}+t_0\|E\|_2]$ contains the spectrum of $A^\theta(t)$, for every $\theta\in\Theta_\delta$.
\end{enumerate}
\end{lemma}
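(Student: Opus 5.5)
The two parts need different arguments: (ii) follows directly from Weyl's inequality, while (i) needs a variational characterization of the nonzero spectrum of $\widehat A(t)=P(t)A(t)$ on $\mathcal R(\widebar S_k)$. The hypothesis $t_0\|E\|_2<\lambda_n$ gives, through Weyl's inequality, $\lambda_{\min}(A(t))\geq\lambda_n-t\|E\|_2>0$ for all $0\leq t\leq t_0$. Hence $A(t)$ is SPD, $A(t)^{1/2}$ exists, and $S_k^\top A(t)S_k$ is invertible, so $P(t)$ is well defined. Throughout, write $A=A(t)$ for brevity in the algebra.

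\emph{Part (i).} I will first establish the structure of $\widehat A(t)$.
\begin{itemize}
\item Since $\widehat A(t)=A-AS_k(S_k^\top AS_k)^{-1}S_k^\top A$, it is symmetric and $\widehat A(t)S_k=0$.
\item By symmetry its range lies in $\mathcal R(S_k)^\perp=\mathcal R(\widebar S_k)$, and it maps this $(n-k)$-dimensional subspace into itself.
\item It has rank $n-k$, as recalled in \Cref{sec:DefCG}.
\end{itemize}
Therefore its nonzero eigenvalues are exactly the eigenvalues of its restriction to $\mathcal R(\widebar S_k)$. By Courant--Fischer they lie between $\min$ and $\max$ of the Rayleigh quotient $x^\top\widehat A(t)x/x^\top x$ over nonzero $x\in\mathcal R(\widebar S_k)$.

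\emph{Upper bound in (i).} Since $\widehat A(t)\preceq A(t)$ in the Loewner order (the subtracted term is positive semidefinite), every such Rayleigh quotient is at most $\max_{x\in\mathcal R(\widebar S_k)}x^\top A(t)x/x^\top x$. Writing $x=\widebar S_k w$, this equals $\lambda_{\max}(\widebar\Lambda_k+t\widebar S_k^\top E\widebar S_k)\leq\lambda_{k+1}+t\|E\|_2$.

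\emph{Lower bound in (i).} This is the step I expect to need the most care. I will use the factorization $\widehat A(t)=A^{1/2}(I-\Pi)A^{1/2}$, where $\Pi$ is the orthogonal projector onto $\mathcal R(A^{1/2}S_k)$. For $x\perp\mathcal R(S_k)$ this gives
\[
x^\top\widehat A(t)x=\|(I-\Pi)A^{1/2}x\|_2^2=\min_{y}\|A^{1/2}(x-S_ky)\|_2^2\geq\lambda_{\min}(A(t))\min_y\|x-S_ky\|_2^2=\lambda_{\min}(A(t))\|x\|_2^2 .
\]
The last equality holds because $x\perp\mathcal R(S_k)$. Combining with Weyl's inequality gives the lower bound $\lambda_n-t\|E\|_2$. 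Both bounds are monotone in $t$, so they are bounded by the stated $t_0$-dependent endpoints.

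\emph{Part (ii).} Here $A^\theta(t)=U^\theta AU^\theta+tU^\theta EU^\theta$. The unperturbed matrix $U^\theta AU^\theta$ has eigenvalues $\theta$ (with multiplicity $k$) and $\lambda_{k+1},\dots,\lambda_n$. All of these lie in $[\lambda_n,\lambda_k]$ because $\theta\in\Theta_\delta\subseteq[\lambda_n,\lambda_k]$. Next, $U^\theta$ is symmetric with eigenvalues $\sqrt{\theta/\lambda_j}\leq1$ for $j\leq k$ (since $\theta\leq\lambda_k\leq\lambda_j$) and $1$ otherwise. Hence $\|U^\theta\|_2\leq1$ and $\|U^\theta EU^\theta\|_2\leq\|E\|_2$. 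Weyl's inequality then places the spectrum of $A^\theta(t)$ in $[\lambda_n-t\|E\|_2,\lambda_k+t\|E\|_2]$, which is contained in the stated interval for $t\leq t_0$, uniformly in $\theta\in\Theta_\delta$.
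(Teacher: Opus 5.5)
Your proof is correct and is essentially the paper's argument. Your identity $x^\top\widehat A(t)x=\min_y\|A(t)^{1/2}(x-S_ky)\|_2^2$ for $x\perp\mathcal R(S_k)$ is the paper's variational characterization of the Schur complement $M_{22}-M_{12}^\top M_{11}^{-1}M_{12}$, your upper bound via $\widehat A(t)\preceq A(t)$ is the paper's evaluation at $z=0$, and part (ii) is the same Weyl argument with $\|U^\theta\|_2\leq1$. The only minor difference is in the lower bound: you use $A(t)\succeq\lambda_{\min}(A(t))I_n$ together with orthogonality to $\mathcal R(S_k)$, whereas the paper uses $A(t)\succeq A-t_0\|E\|_2I_n$ and the block-diagonal form of $A$ in the basis $[S_k,\widebar S_k]$ to obtain the matrix bound $\widebar\Lambda_k-t_0\|E\|_2I_{n-k}$; both yield the stated interval.
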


\begin{proof}
First, let us present the projected matrix as a Schur complement. Since $[S_k,\widebar S_k]$ is an orthonormal basis of $\mathbb R^n$, write $A(t)$ in this basis as $M_{11} := S_k^\top A(t) S_k$, $M_{12} := S_k^\top A(t)\widebar S_k$, and $M_{22} := \widebar S_k^\top A(t)\widebar S_k$.
Using $A(t)S_k=S_kM_{11}+\widebar S_kM_{12}^\top$ and $S_k^\top A(t)=M_{11}S_k^\top+M_{12}\widebar S_k^\top$, the matrix $\widehat A(t)=A(t)-A(t)S_kM_{11}^{-1}S_k^\top A(t)$ expands to
{\small
\[
\widehat A(t)
=A(t)-\bigl(S_kM_{11}+\widebar S_kM_{12}^\top\bigr)M_{11}^{-1}\bigl(M_{11}S_k^\top+M_{12}\widebar S_k^\top\bigr)
=\widebar S_k\bigl(M_{22}-M_{12}^\top M_{11}^{-1}M_{12}\bigr)\widebar S_k^\top.
\]
}
Since $\widebar S_k$ has orthonormal columns, the nonzero spectrum of $\widehat A(t)$ therefore coincides with the spectrum of $M_{22}-M_{12}^\top M_{11}^{-1}M_{12}$, and it suffices to prove
\begin{equation}\label{eq:shur-eq}
    \widebar\Lambda_k-t_0\|E\|_2I_{n-k}\preceq M_{22}-M_{12}^\top M_{11}^{-1}M_{12}\preceq\widebar\Lambda_k+t_0\|E\|_2I_{n-k}.
\end{equation}

We use the following characterization of the Schur complement: For any $v\in\mathbb R^{n-k}$,
\[
v^\top (M_{22}-M_{12}^\top M_{11}^{-1}M_{12})v=\min_{z\in\mathbb R^k}(S_kz+\widebar S_kv)^\top A(t)(S_kz+\widebar S_kv).
\]
Indeed, expanding the quadratic form gives
\begin{equation}\label{eq:quadra-shur}
(S_kz+\widebar S_kv)^\top A(t)(S_kz+\widebar S_kv)
=z^\top M_{11}z+2z^\top M_{12}v+v^\top M_{22}v,    
\end{equation}
and, since $M_{11}$ is SPD, the minimum of~\eqref{eq:quadra-shur} with respect to $z$ is attained at $z=-M_{11}^{-1}M_{12}v$; substituting back gives $v^\top(M_{22}-M_{12}^\top M_{11}^{-1}M_{12})v$, as claimed.

We now prove the two bounds in~\eqref{eq:shur-eq}. Since $0\leq t\leq t_0$, we have $A-t_0\|E\|_2 I_n\preceq A(t)\preceq A+t_0\|E\|_2I_n$. For the lower bound, this and the characterization above give, for any $v\in\mathbb{R}^{n-k}$,
{\small
\begin{align*}
v^\top (M_{22}-M_{12}^\top M_{11}^{-1}M_{12})v
&\geq \min_{z\in\mathbb R^k}
 (S_kz+\widebar S_kv)^\top(A-t_0\|E\|_2I_n)(S_kz+\widebar S_kv)\\
&=\min_{z\in\mathbb R^k}
\left\{z^\top(\Lambda_k-t_0\|E\|_2I_k)z
+v^\top(\widebar\Lambda_k-t_0\|E\|_2I_{n-k})v\right\}\\
&=v^\top(\widebar\Lambda_k-t_0\|E\|_2I_{n-k})v,
\end{align*}}
where we used $S_k^\top A\widebar S_k=0$ and $S_k^\top\widebar S_k=0$, and where the minimum is attained at $z=0$ because $\Lambda_k-t_0\|E\|_2I_k\succ0$. For the upper bound, evaluating~\eqref{eq:quadra-shur} at $z=0$ and using $A(t)\preceq A+t_0\|E\|_2I_n$ gives, for any $v\in\mathbb{R}^{n-k}$,
\[
v^\top(M_{22}-M_{12}^\top M_{11}^{-1}M_{12})v
\leq v^\top\widebar S_k^\top A(t)\widebar S_kv
\leq v^\top(\widebar\Lambda_k+t_0\|E\|_2I_{n-k})v,
\]
where we used $\widebar S_k^\top A\widebar S_k=\widebar\Lambda_k$. Together, the two bounds establish~\eqref{eq:shur-eq}, and hence that the nonzero spectrum of $\widehat A(t)$ lies in $[\lambda_n-t_0\|E\|_2,\lambda_{k+1}+t_0\|E\|_2]$.

For $\theta\in\Theta_\delta$, the eigenvalues of $A^\theta(0)$ lie in $[\lambda_n,\lambda_k]$, and $\|U^\theta\|_2\leq1$. Hence
\[
\|A^\theta(t)-A^\theta(0)\|_2=t\,\|U^\theta EU^\theta\|_2\leq t\,\|E\|_2\leq t_0\|E\|_2.
\]
By Weyl's inequality we prove (ii).
\end{proof}

We now state our main result: a second-order comparison of the energy-norm errors of PCG with $U^\theta$ and deflated CG, with an explicit remainder.

% \begin{theorem}\label{theorem:Error_perturbed_case-f}
% Let $\theta > 0$, $1\leq k\leq n-1$ and $t > 0$. Assume that \Cref{ass:simple_spectrum,ass:z_0,ass:y_0} hold. For $\ell \geq 1$, let $x_{\ell}^{[P]}(t)=U^\theta y_\ell$ denote the $\ell$th iterate for~\eqref{eq:precond_pert_sys} and $x_\ell^{[\mathrm D]}(t)$ the one given by~\eqref{eq:defcg_iterates}. Assume that the iterate $\ell +1$ PCG and deflated CG have not reached yet the solution.
% Let $0<\delta\leq\gamma_0$.

% Then, there exist $t^*>0$, $\gamma_1>0$, and $\gamma_2>0$, all independent of $\ell$ and $k$, such that, for all $\theta\in\Theta_\delta$, and for any $0\leq t\leq t^*$,
% \begin{equation}\label{eq:upperbound_explicit_ell}
% \left\|x^\ast(t)-x_{\ell+1}^{[P]}(t)\right\|_{A(t)}^2
% \leq
% \frac{\alpha(t, \theta)^2}{\theta^2}
% \left\|x^\ast(t)-x_\ell^{[\mathrm D]}(t)\right\|_{A(t)}^2
% +\left(\gamma_1q_k^{2\ell}+\gamma_2\ell q_k^{2\ell-1}\right)t^2,
% \end{equation}
% where $\alpha(t, \theta)
% :=\max_{k+1\leq i\leq n}
% \left|\widehat\lambda_i(t)-\theta\right|$ and $q_k
% :=\frac{\lambda_k + t^*\|E\|_2}{\lambda_n - t^*\|E\|_2}-1.$
% \end{theorem}

\begin{theorem}\label{theorem:Error_perturbed_case}
Let~\Cref{ass:simple_spectrum} hold and fix $\delta \in (0,\gamma_0]$. Consider $k \in \{1,...,n-1\}$ and  $\theta\in\Theta_\delta$. Let $z_0$ and $y_0$ be as in~\Cref{ass:z_0,ass:y_0}. Then, there exist $t^\ast \in (0,\lambda_n/\|E\|_2)$,  $\gamma_1>0$ , and $\gamma_2>0$ such that, for every  $1\leq \ell\leq n-k$ and $t\in[0,t^\ast]$ such that $x_{\ell+1}^{[P]}(t)=U^\theta y_{\ell+1}$ and $x_\ell^{[\mathrm D]}(t)$ exist,
{\small
\begin{equation}\label{eq:upperbound_explicit_ell}
\left\|x^\ast(t)-x_{\ell+1}^{[P]}(t)\right\|_{A(t)}^2
\leq
\frac{\alpha(t, \theta)^2}{\theta^2}
\left\|x^\ast(t)-x_\ell^{[\mathrm D]}(t)\right\|_{A(t)}^2
+\left(\gamma_1q_k^{2\ell}+\gamma_2\ell q_k^{2\ell-1}\right)t^2,
\end{equation}}
where $\alpha(t, \theta)
:=\displaystyle\max_{k+1\leq i\leq n}
\left|\widehat\lambda_i(t)-\theta\right|$ and $q_k
:=\frac{\lambda_k + t^\ast\|E\|_2}{\lambda_n - t^\ast\|E\|_2}-1$.
\end{theorem}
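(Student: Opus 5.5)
The proof rests on the CG minimization property for the preconditioned system~\eqref{eq:precond_pert_sys}, tested with one well-chosen polynomial of degree $\ell+1$. Write the PCG error spectrally as
\[
\|x^\ast(t)-x_{\ell+1}^{[P]}(t)\|_{A(t)}^2=\min_{p\in\mathbb P_{\ell+1}(0)}\sum_{i=1}^n\frac{\widetilde\beta_i^\theta(t)^2}{\lambda_i^\theta(t)}\,p(\lambda_i^\theta(t))^2 .
\]
This holds because $\|x^\ast-U^\theta y\|_{A(t)}=\|y^\ast-y\|_{A^\theta(t)}$ and $\widetilde\beta_i^\theta(t)$ are the coefficients of the preconditioned initial residual. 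The natural test polynomial is
\[
q(\lambda)=\Bigl(1-\frac{\lambda}{\theta}\Bigr)p_\ell^{(t)}(\lambda),
\]
where $p_\ell^{(t)}$ is the optimal deflated CG polynomial from~\Cref{thm:err_def_Dk}. Then $q\in\mathbb P_{\ell+1}(0)$, and the PCG error is bounded by $\sum_i \widetilde\beta_i^\theta(t)^2 q(\lambda_i^\theta(t))^2/\lambda_i^\theta(t)$. Split this sum into the branches $i\le k$, issued from $\theta$, and the branches $i\ge k+1$, issued from $\lambda_i$.

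For the branches $i\ge k+1$, compare term by term with the deflated CG sum in~\Cref{thm:err_def_Dk}. By~\Cref{lem:lambdatilda,lm:Btilde}, $\lambda_i^\theta(t)=\widehat\lambda_i(t)+O(t^2)$, and by~\Cref{lm:eta_beta}, $\widetilde\beta_i^\theta(t)^2=\widetilde\zeta_i(t)^2+O(t^2)$, uniformly in $\theta\in\Theta_\delta$. Replacing $\lambda_i^\theta(t)$ by $\widehat\lambda_i(t)$ and $\widetilde\beta_i^\theta$ by $\widetilde\zeta_i$ makes each term at most $\frac{\alpha(t,\theta)^2}{\theta^2}\widetilde\zeta_i(t)^2 p_\ell^{(t)}(\widehat\lambda_i(t))^2/\widehat\lambda_i(t)$. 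Summing these gives the main term of~\eqref{eq:upperbound_explicit_ell}.

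The replacement errors must be tracked with explicit $\ell$-dependence.
\begin{itemize}
\item Changing $\widetilde\beta_i^2$ to $\widetilde\zeta_i^2$ costs $O(t^2)\,\sup|q|^2$ on the spectral interval.
\item Changing the evaluation point costs, by the mean value theorem, $O(t^2)\sup|q|\,\sup|q'|$, together with the smooth factors $1/\lambda$ and $(1-\lambda/\theta)^2$.
\end{itemize}
By~\Cref{lm:spectral_intervals}, all relevant eigenvalues for $t\le t^\ast$ lie in $[a,b]=[\lambda_n-t^\ast\|E\|_2,\lambda_k+t^\ast\|E\|_2]$; choose $t^\ast<\lambda_n/\|E\|_2$ small enough for all the earlier expansions to hold. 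On $[a,b]$ the polynomial $p_\ell^{(t)}$ has roots equal to the Ritz values of the projected matrix, which lie in $[a,\lambda_{k+1}+t^\ast\|E\|_2]$. Each factor $|1-\lambda/\rho|$ is therefore at most $b/a-1=q_k$, since $\rho\ge a$. This gives $\sup|p_\ell^{(t)}|\le q_k^\ell$, and by the product rule $\sup|p_\ell^{(t)\prime}|\le \ell q_k^{\ell-1}/a$. These are the sources of $\gamma_1 q_k^{2\ell}$ and $\gamma_2\ell q_k^{2\ell-1}$. The bound is crude when $q_k<1$, but it is the form stated, and the Cauchy-type estimate could be absorbed into the constants if needed.

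For the branches $i\le k$, Weyl's inequality gives $|\lambda_i^\theta(t)-\theta|\le t\|E\|_2$, so $(1-\lambda_i^\theta(t)/\theta)^2=O(t^2)$. For these branches the residual coefficients $\widetilde\beta_i^\theta(t)$ project onto the eigenspace near $\theta$, and $U^\theta r_0$ has no component on $\mathcal R(S_k)$ by~\Cref{ass:z_0}. The coefficient is therefore itself $O(t)$, and these terms are in fact $O(t^4)q_k^{2\ell}$, absorbed into $\gamma_1 q_k^{2\ell}t^2$. Even using only the $O(t^2)$ from $q$, they are absorbed.

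The main obstacle is uniformity. All $O(t^2)$ constants must be independent of $\theta\in\Theta_\delta$, $\ell$, and $t$, and the constant for the invariant subspace near $\theta$, which has multiplicity $k$, cannot come from~\Cref{prop:gap_bound}. I would handle the latter by a Davis--Kahan argument using the gap $\delta$ between $\theta$ and $\{\lambda_{k+1},\dots,\lambda_n\}$, applied to the spectral projector rather than to individual eigenvectors.
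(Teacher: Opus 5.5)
Your proposal is correct and follows essentially the same route as the paper. The shared ingredients are the test polynomial $(1-\lambda/\theta)p_\ell^{(t)}(\lambda)$, the split into clustered and nonclustered branches, the three-term replacement controlled by the mean value theorem, and the Ritz-value bounds $|p_\ell^{(t)}|\le q_k^\ell$ and $|\partial_\lambda p_\ell^{(t)}|\le \ell q_k^{\ell-1}/(\lambda_n-t^\ast\|E\|_2)$.

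The Davis--Kahan step you single out as the main obstacle is not needed. As you note yourself, the clustered terms only require Weyl's inequality together with $\sum_{i\le k}\widetilde\beta_i^\theta(t)^2/\lambda_i^\theta(t)\le\|U^\theta(b-A(t)z_0)\|_2^2/(\lambda_n-t^\ast\|E\|_2)$. This is exactly how the paper handles them, so no eigenvector information for the multiplicity-$k$ cluster is ever used.
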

 
\begin{proof}
The proof proceeds in three steps: (1) we split the PCG error into a clustered and a nonclustered contribution using an admissible test polynomial; (2) we bound the clustered contribution; and (3) we bound the nonclustered contribution using~\Cref{lm:eta_beta}.
Let $t_0>0$ be chosen as in~\Cref{lm:spectral_intervals}. A value $t^\ast>0$ that is uniform in $k$ and in $\theta\in\Theta_\delta$ is obtained by taking the minimum of $t_0$ and the finitely many positive  thresholds associated, for $1\leq k\leq n-1$, with~\Cref{lm:Btilde,lem:lambdatilda,lm:eta_beta}.

Fix $1\leq \ell \leq n-k$ and $t\in[0,t^\ast]$ for which both iterates exist. For $i=1,\ldots,n$, set $\widetilde{\beta}_i^\theta(t):=s_i^{\theta}(t)^\top\bigl(U^\theta b-A^{\theta}(t)y_0\bigr)$ and let $y^\ast(t)$ be the exact solution of $A^{\theta}(t)y=U^\theta b$. Then, $x^\ast(t)=U^\theta y^\ast(t)$ and $\left\|x^\ast(t)-U^\theta y\right\|_{A(t)}^2
=\left\|y^\ast(t)-y\right\|_{A^{\theta}(t)}^2$ for any $y$.
The standard CG minimization property therefore gives
\begin{equation}\label{eq:pcg_error_representation}
\left\|x^\ast(t)-x_{\ell+1}^{[P]}(t)\right\|_{A(t)}^2
=\min_{q\in\mathbb P_{\ell+1}(0)}
\sum_{i=1}^n
\frac{\widetilde{\beta}_i^\theta(t)^2}{\lambda_i^\theta(t)}
q\bigl(\lambda_i^\theta(t)\bigr)^2.
\end{equation}
%For $i=k+1,\ldots,n$, \Cref{lm:Btilde,lem:lambdatilda} imply
%\begin{equation}\label{eq:mu_nu_O2}
%\lambda_i^\theta(t)-\widehat{\lambda}_i(t)=O(t^2),
%\qquad
%\widehat{\lambda}_i(t)-\lambda_i(t)=O(t^2).
%\end{equation}
%The first estimate is uniform for $\theta\in\Theta_\delta$ by~\Cref{lem:lambdatilda}; the second does not involve $\theta$ and follows from~\Cref{lm:Btilde}.

Consider $q_{\ell+1}^{(t)}(\lambda)
=\left(1-\lambda/\theta\right)p_\ell^{(t)}(\lambda)
\in\mathbb P_{\ell+1}(0)$. Using this polynomial in~\eqref{eq:pcg_error_representation} and separating the sum over the eigenvalues of $A^{\theta}(t)$ into the clustered indices $i=1,\ldots,k$ (close to $\theta$) and the nonclustered indices $i=k+1,\ldots,n$ gives
{\small
\begin{equation}\label{eq:pcg_split}
\left\|x^\ast(t)-x_{\ell+1}^{[P]}(t)\right\|_{A(t)}^2
\leq T_{\mathrm c}^{\theta}(t)+T_{\mathrm n}^{\theta}(t),
\end{equation}}
where $T_{\mathrm c}^{\theta}(t)
=\sum_{i=1}^k
\widetilde{\beta}_i^\theta(t)^2 
f_{\ell,t}^{\theta}(\lambda_i^\theta(t))$ and $T_{\mathrm n}^{\theta}(t)
=\sum_{i=k+1}^n\widetilde{\beta}_i^\theta(t)^2
f_{\ell,t}^{\theta}(\lambda_i^\theta(t))$ with $f_{\ell,t}^{\theta}(\lambda) :=\frac{1}{\lambda}
\left(1-\frac{\lambda}{\theta}\right)^2 p_\ell^{(t)}(\lambda)^2$. For conciseness, define $\underline{\mu}:=\lambda_n-t^\ast\|E\|_2$ and $\widebar{\mu}_j:=\lambda_j+t^\ast\|E\|_2$ for $j\in\{k,k+1\}$.

We first bound $T_{\mathrm c}^{\theta}(t)$. Let $p_\ell^{(t)}(\lambda)
=\prod_{j=1}^\ell\left(1-\frac{\lambda}{\xi_j(t)}\right)$,
where $\xi_1(t),\ldots,\xi_\ell(t)$ are the Ritz values. Since these Ritz values lie in $[\underline{\mu},\widebar{\mu}_{k+1}]$, for every $\lambda\in[\underline{\mu},\widebar{\mu}_k]$,
{\small
\begin{equation}\label{eq:p_q_bound}
\left|p_\ell^{(t)}(\lambda)\right|\leq \max_{\substack{\lambda\in [\underline{\mu},\widebar{\mu}_k]\\ \xi\in[\underline{\mu},\widebar{\mu}_{k+1}]}}\left|1-\frac{\lambda}{\xi}\right|^\ell = \left(\dfrac{\lambda_k + t^\ast\|E\|_2}{\lambda_n - t^\ast\|E\|_2}-1 \right)^\ell=q_k^\ell,
\end{equation}}
The equality follows from the fact that the maximum is attained at $\lambda=\widebar{\mu}_k$ and $\xi=\underline{\mu}$ (using $\underline{\mu}>0$ and $\widebar{\mu}_k\geq\widebar{\mu}_{k+1}$). For $i\leq k$, Weyl's inequality gives $|\lambda_i^\theta(t)-\theta|\leq t$; since $\theta\geq\lambda_n$, this yields
$
\left|1-\frac{\lambda_i^\theta(t)}{\theta}\right|^2
\leq\frac{t^2}{\theta^2}
\leq\frac{t^2}{\lambda_n^2}$. Using~\eqref{eq:p_q_bound}, gives
{\small
\begin{align*}
T_{\mathrm c}^{\theta}(t)\leq \left(\sum_{i=1}^k
\frac{\widetilde{\beta}_i^\theta(t)^2}{\lambda_i^\theta(t)}\right)q_k^{2\ell}t^2/\lambda_n^2 
&\leq \frac{\left\|U^\theta\bigl(b-A(t)z_0\bigr)\right\|_2^2}
{\underline{\mu}\,\lambda_n^2}\,q_k^{2\ell}t^2\\
&\leq \frac{\displaystyle\max_{0\leq s\leq t^\ast}
\left\|b-A(s)z_0\right\|_2^2}
{\underline{\mu}\,\lambda_n^2}\,q_k^{2\ell}t^2
=:c_{1,k}q_k^{2\ell}t^2.
\end{align*}}
Here, we used $\lambda_i^\theta(t)\geq\underline{\mu}$ (\Cref{lm:spectral_intervals}) and $\|U^\theta\|_2\leq1$.
The constant $c_{1,k}$ depends on $k$ because the initial guess
$z_0$ depends on $k$. Since $k\in\{1,\ldots,n-1\}$, define $c_1:=\max_{1\leq j\leq n-1}c_{1,j}$.
Then 
\begin{equation}\label{eq:Tc_explicit_ell}
    T_{\mathrm c}^{\theta}(t)\leq c_1q_k^{2\ell}t^2.
\end{equation}

We next bound $T_{\mathrm n}^{\theta}(t)$. Differentiating $p_\ell^{(t)}$ and using $\xi_j(t)\geq\underline{\mu}$ gives, for $\lambda\in [\underline{\mu},\widebar{\mu}_{k}]$,
{\small
\begin{equation}\label{eq:p_prime_q_bound}
\left|\partial_\lambda p_\ell^{(t)}(\lambda)\right|= \left|-\sum_{j=1}^{\ell}\frac{1}{\xi_j(t)}
\prod_{\substack{h=1\\h\neq j}}^{\ell}
(1-\frac{\lambda}{\xi_h(t)})\right|
\leq \frac{\ell}{\underline{\mu}}q_k^{\ell-1}.
\end{equation}}
Write $f_{\ell,t}^{\theta}(\lambda)=g^\theta(\lambda)p_\ell^{(t)}(\lambda)^2$ with $g^\theta(\lambda)=\lambda^{-1}(1-\lambda/\theta)^2$. For $\lambda\in [\underline{\mu},\widebar{\mu}_k]$,
{\small
\[
|g^\theta(\lambda)|
=\frac1\lambda(1-\frac\lambda\theta)^2
\leq\frac1{\underline{\mu}}(1+\frac{\lambda_1+t^\ast\|E\|_2}{\lambda_n})^2=:c_2,
\;
|\partial_\lambda g^\theta(\lambda)|
=\left|-\frac1{\lambda^2}+\frac1{\theta^2}\right|
\leq\frac1{\underline{\mu}^2}+\frac1{\lambda_n^2}=:c_2'.
\]
}
Combining these estimates with~\eqref{eq:p_q_bound} and~\eqref{eq:p_prime_q_bound} yields $c_2,c_3,c_4>0$, independent of $\ell$ and of $\theta\in\Theta_\delta$, such that the following bounds hold for every $\lambda\in [\underline{\mu},\widebar{\mu}_k]$:
\begin{equation}\label{eq:f_q_bound}
|f_{\ell,t}^{\theta}(\lambda)|
\leq c_2 q_k^{2\ell},
\qquad |\partial_\lambda f_{\ell,t}^{\theta}(\lambda)|
\leq c_3 q_k^{2\ell} + c_4 \ell q_k^{2\ell-1}.
\end{equation}
For each $i=k+1,\ldots,n$ 
\begin{equation}\label{eq:nonclustered-term-decomposition}
\begin{aligned}
\widetilde{\beta}_i^\theta(t)^2f_{\ell,t}^{\theta}(\lambda_i^\theta(t))
={}&\widetilde\zeta_i(t)^2f_{\ell,t}^{\theta}(\widehat{\lambda}_i(t))+\bigl(\widetilde{\beta}_i^\theta(t)^2-\widetilde\zeta_i(t)^2\bigr)
  f_{\ell,t}^{\theta}(\lambda_i^\theta(t))\\
&+\widetilde\zeta_i(t)^2
  \bigl(f_{\ell,t}^{\theta}(\lambda_i^\theta(t))-f_{\ell,t}^{\theta}(\widehat{\lambda}_i(t))\bigr).
\end{aligned}
\end{equation}
\Cref{lm:eta_beta,lem:lambdatilda} provide constants
$c_{\beta, k, i},c_{\lambda,k, i}\geq0$ (independent of $\theta$) such that
\begin{equation*}\label{eq:clambdak}
\left|\widetilde\beta_i^\theta(t)^2-\widetilde\zeta_i(t)^2\right|
\leq c_{\beta, k, i}t^2,
\qquad
\left|\lambda_i^\theta(t)-\widehat\lambda_i(t)\right|
\leq c_{\lambda, k, i}t^2,
\qquad i=k+1,\ldots,n. 
\end{equation*}
Setting $c_\beta := \max\limits_{1\leq j\leq n-1 ,j+1\leq l\leq n} c_{\beta, j, l}$ and $c_\lambda := \max\limits_{1\leq j\leq n-1 ,j+1\leq l\leq n} c_{\lambda, j, l}$, yields
{\small
\begin{equation}\label{eq:clambda}
\left|\widetilde\beta_i^\theta(t)^2-\widetilde\zeta_i(t)^2\right|
\leq c_\beta t^2,
\qquad
\left|\lambda_i^\theta(t)-\widehat\lambda_i(t)\right|
\leq c_{\lambda}t^2,
\qquad i=k+1,\ldots,n. 
\end{equation}}
Using~\eqref{eq:f_q_bound}, the sum of the second terms
in~\eqref{eq:nonclustered-term-decomposition} satisfies
{\small
\begin{equation}\label{eq:sec_term}
    \sum_{i=k+1}^n
\left|\widetilde\beta_i^\theta(t)^2-\widetilde\zeta_i(t)^2\right|
\left|f_{\ell,t}^\theta(\lambda_i^\theta(t))\right|
\leq n c_{\beta}c_2q_k^{2\ell}t^2.
\end{equation}}
Set $c_{\zeta}:=\max\limits_{1\leq j\leq n-1} \max_{0\leq s\leq t^\ast} \sum_{i=j+1}^n\widetilde\zeta_i(s)^2$.~\eqref{eq:f_q_bound} and the mean value theorem yield
{\small
\begin{equation}\label{eq:third_term}
\begin{aligned}
\sum_{i=k+1}^n\widetilde\zeta_i(t)^2
\left|f_{\ell,t}^\theta(\lambda_i^\theta(t))
-f_{\ell,t}^\theta(\widehat\lambda_i(t))\right|
&\leq 
c_{\zeta} 
\max_{k+1\leq i\leq n }\left|f_{\ell,t}^\theta(\lambda_i^\theta(t))
-f_{\ell,t}^\theta(\widehat\lambda_i(t))\right|\\
&\leq
c_{\zeta}
\sup_{\lambda\in[\underline{\mu},\widebar{\mu}_k]}|\partial_\lambda f_{\ell,t}^{\theta}(\lambda)|\max_{k+1\leq i\leq n }\left|\lambda_i^\theta(t)-\widehat\lambda_i(t)\right|\\
&\leq
c_{\zeta}c_{\lambda}
\left(c_3q_k^{2\ell}+c_4\ell q_k^{2\ell-1}\right)t^2.
    \end{aligned}
\end{equation}}
Summing~\eqref{eq:nonclustered-term-decomposition} over $i=k+1,\ldots,n$, and combining~\eqref{eq:sec_term}-\eqref{eq:third_term}, gives 
{\small
\[
T_{\mathrm n}^{\theta}(t)
\leq \sum_{i=k+1}^n\widetilde\zeta_i(t)^2 f_{\ell,t}^{\theta}(\widehat{\lambda}_i(t))
+\left(c_5q_k^{2\ell}+c_6\ell q_k^{2\ell-1}\right)t^2,
\]}
where $c_5:=n c_\beta c_2+ c_\zeta c_\lambda c_3$ and $c_6:= c_\zeta c_\lambda c_4$. Using $\left|1-\widehat{\lambda}_i(t)/{\theta}\right|^2
\leq{\alpha(t, \theta)^2}/{\theta^2}$
and the deflated CG error representation in~\Cref{thm:err_def_Dk}, we obtain
{\small
\begin{equation}\label{eq:Tn_final}
T_{\mathrm n}^{\theta}(t)
\leq
\frac{\alpha(t, \theta)^2}{\theta^2}
\left\|x^\ast(t)-x_\ell^{[\mathrm D]}(t)\right\|_{A(t)}^2
+\left(c_5q_k^{2\ell}+c_6\ell q_k^{2\ell-1}\right)t^2.
\end{equation}}
The result follows from~\eqref{eq:pcg_split}, \eqref{eq:Tc_explicit_ell}, and~\eqref{eq:Tn_final} with $\gamma_1:=c_1+c_5$ and $\gamma_2:=c_6$.
\end{proof}

\Cref{theorem:Error_perturbed_case} provides an explicit second-order comparison between PCG with $U^\theta$ and deflated CG. After one additional PCG iteration, the squared energy-norm error is bounded by the squared deflated CG error multiplied by $\left(\alpha(t, \theta)/\theta\right)^2$, plus a remainder of order $t^2$. The quantity $\alpha(t, \theta)$ is the largest distance from $\theta$ to the nonzero eigenvalues of $\widehat A(t)$; by~\Cref{lm:Btilde}, these eigenvalues agree with the corresponding eigenvalues of $A(t)$ up to $O(t^2)$. Thus, if $\alpha(t, \theta)/\theta<1$, the leading term is contractive.

The explicit factor $\gamma_1q_k^{2\ell}+\gamma_2\ell q_k^{2\ell-1}$ describes how the perturbation remainder depends on the iteration index. When $q_k>1$, this upper bound can grow rapidly with $\ell$ and can be pessimistic; when $0<q_k<1$, its behavior is different and it may decrease as $\ell$ grows. This concerns only the quality of the perturbation estimate and does not imply that either iterative method has increasing error. Increasing $k$ removes more eigenvalues from the nondeflated spectrum and may reduce $q_k$.

Following~\citet{diouane2026spectral}, the next proposition introduces two
scaling choices and shows that both yield a contractive leading factor for
sufficiently small perturbations.

\begin{proposition}\label{prop:contractive_scalings}
Suppose the assumptions of \Cref{theorem:Error_perturbed_case} hold.
\begin{enumerate}
  \item[\textup{(i)}] Then, for sufficiently small $t \geq 0$, 
  the bound \eqref{eq:upperbound_explicit_ell} holds with $\theta = \lambda_k$ and its 
  leading factor is contractive, i.e., $\alpha(t,\lambda_k)/\lambda_k < 1$.
  \item[\textup{(ii)}] Suppose, in addition, that $\lambda_n$ is known, and let 
  $\theta_m := (\lambda_k + \lambda_n)/2$ with 
  $\theta_m \notin \{\lambda_{k+1}, \ldots, \lambda_n\}$. Then, for sufficiently small 
  $t \geq 0$, the bound \eqref{eq:upperbound_explicit_ell} also holds with 
  $\theta = \theta_m$, and its leading factor is strictly smaller than the one 
  in~\textup{(i)}, i.e., $\alpha(t,\theta_m)/\theta_m < \alpha(t,\lambda_k)/\lambda_k < 1$.
\end{enumerate}
\end{proposition}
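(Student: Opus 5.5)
The proof has two parts. First, check that each proposed $\theta$ lies in $\Theta_\delta$ for some admissible $\delta$, so that \Cref{theorem:Error_perturbed_case} applies. Second, compute the leading factor at $t=0$ and pass to small $t$ by continuity, using \Cref{lm:Btilde}.

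For (i), $\lambda_k\in\Theta_{\gamma_0}\subseteq\Theta_\delta$, as already noted after \Cref{ass:scalling}. So \eqref{eq:upperbound_explicit_ell} holds with $\theta=\lambda_k$. At $t=0$ the nonzero eigenvalues of $\widehat A(0)$ are $\lambda_{k+1},\ldots,\lambda_n$, all in $(0,\lambda_k)$. Hence
\[
\alpha(0,\lambda_k)=\max_{k+1\leq i\leq n}|\lambda_i-\lambda_k|=\lambda_k-\lambda_n<\lambda_k ,
\]
and $\alpha(0,\lambda_k)/\lambda_k=1-\lambda_n/\lambda_k<1$. By \Cref{lm:Btilde} and \eqref{eq:FirstOrder} (or directly by \Cref{lm:spectral_intervals}(i)), each $\widehat\lambda_i(t)$ is within $t\|E\|_2+O(t^2)$ of $\lambda_i$. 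Therefore $|\alpha(t,\lambda_k)-\alpha(0,\lambda_k)|\leq\max_i|\widehat\lambda_i(t)-\lambda_i|\to0$, since a maximum of absolute values is $1$-Lipschitz in its entries. The strict inequality at $t=0$ then persists for $t$ small. A concrete margin is $t\|E\|_2<\lambda_n$, using the spectral enclosure in \Cref{lm:spectral_intervals}(i).

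For (ii), first note $\theta_m\in[\lambda_n,\lambda_k]$. Since $\theta_m$ avoids the finite set $\{\lambda_{k+1},\ldots,\lambda_n\}$, the number $\delta_m:=\min(\gamma_0,\min_{i>k}|\theta_m-\lambda_i|)$ is positive and $\theta_m\in\Theta_{\delta_m}$. Applying \Cref{theorem:Error_perturbed_case} with this $\delta$ gives the bound. At $t=0$, all the $\lambda_i$ with $i>k$ lie in $[\lambda_n,\lambda_{k+1}]\subset[\lambda_n,\lambda_k]$, and $\theta_m$ is the midpoint of $[\lambda_n,\lambda_k]$. So
\[
\alpha(0,\theta_m)\leq\frac{\lambda_k-\lambda_n}{2},
\]
with the maximum attained at $\lambda_n$. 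This gives
\[
\frac{\alpha(0,\theta_m)}{\theta_m}\leq\frac{\lambda_k-\lambda_n}{\lambda_k+\lambda_n}<\frac{\lambda_k-\lambda_n}{\lambda_k}=\frac{\alpha(0,\lambda_k)}{\lambda_k},
\]
where the strict inequality uses $\lambda_n>0$ (and $\lambda_k>\lambda_n$, since $k\leq n-1$ and the spectrum is simple). Both ratios $\alpha(t,\theta)/\theta$ are continuous in $t$ by the same Lipschitz argument. The strict chain $\alpha(0,\theta_m)/\theta_m<\alpha(0,\lambda_k)/\lambda_k<1$ therefore survives for $t$ sufficiently small, namely for $t$ below half of the smaller gap in the chain, divided by the Lipschitz constant. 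An explicit sufficient condition is
\[
\frac{2t\|E\|_2+O(t^2)}{\lambda_n}<\text{gap}.
\]

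The main obstacle is bookkeeping rather than analysis. The threshold $t$ for (ii) depends on $\delta_m$ through \Cref{theorem:Error_perturbed_case}, and the continuity of $\widehat\lambda_i(t)$ must be justified. I would get continuity from \Cref{lm:Btilde} (the $O(t^2)$ expansion), or cleanly from Weyl-type enclosures via the Schur-complement argument of \Cref{lm:spectral_intervals}, which yields $|\widehat\lambda_i(t)-\lambda_i|\leq t\|E\|_2$ for the ordered nonzero eigenvalues. The final ``sufficiently small $t$'' is the minimum of these finitely many thresholds.
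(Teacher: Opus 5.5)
Your proposal is correct and follows the paper's proof. You place $\lambda_k$ and $\theta_m$ in suitable sets $\Theta_\delta$, using the same $\delta_m=\min\{\gamma_0,\min_{i>k}|\theta_m-\lambda_i|\}$ as the paper, and you evaluate the leading factors at $t=0$ to get $(\lambda_k-\lambda_n)/\lambda_k$ and $(\lambda_k-\lambda_n)/(\lambda_k+\lambda_n)$. You then carry the strict inequalities to small $t$ by continuity of the nonzero spectrum of $\widehat A(t)$. Your use of the enclosure from \Cref{lm:spectral_intervals} makes that continuity step quantitative, for instance giving $\alpha(t,\lambda_k)<\lambda_k$ whenever $t\|E\|_2<\lambda_n$, hence on all of $[0,t^\ast]$; this is a mild sharpening of the paper's bare continuity argument.
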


\begin{proof}
Indeed, since $\lambda_k-\lambda_i\geq\lambda_k-\lambda_{k+1}\geq\gamma_0$
for every $i>k$, we have $\lambda_k\in\Theta_{\gamma_0}$. Let
$t^\ast>0$ be the threshold supplied by
\Cref{theorem:Error_perturbed_case} when it is applied with
$\delta=\gamma_0$, i.e., for any $t \leq t^\ast$, 
  the bound \eqref{eq:upperbound_explicit_ell} holds with $\theta = \lambda_k$. 
%  Thus,\eqref{eq:upperbound_explicit_ell} holds for every $t\in[0,t_r^\ast]$.
At $t=0$, the nonzero eigenvalues of $\widehat A(0)$ are
$\lambda_{k+1},\ldots,\lambda_n$, and hence $\alpha(0,\lambda_k)/\lambda_k
=(\lambda_k-\lambda_n)/\lambda_k<1$.
Since the eigenvalues of $\widehat A(t)$ depend continuously on $t$, and hence so
does $t\mapsto\alpha(t,\lambda_k)$. Therefore, there exists
$\widebar t>0$ such that
$\alpha(t,\lambda_k)/\lambda_k<1$ for every $t\in[0,\widebar t]$.
Hence, by taking $\tau:=\min\{t^\ast,\widebar t\}$
proves~\textup{(i)}.

To prove~\textup{(ii)}, suppose that $\lambda_n$ is known and that $\theta_m\notin\{\lambda_{k+1},\ldots,\lambda_n\}$. Set $\delta
:=\min\left\{
\gamma_0,
\min_{i=k+1,\ldots,n}|\theta_m-\lambda_i|
\right\}$. Then, $0<\delta\leq\gamma_0$ and
$\theta_m\in\Theta_\delta$. Let $t_m^\ast>0$ be the threshold
supplied by~\Cref{theorem:Error_perturbed_case} for this $\delta$, i.e., for any $t \leq t_m^\ast$, 
  the bound \eqref{eq:upperbound_explicit_ell} holds with $\theta = \theta_m$. At $t=0$,
\begin{equation}\label{eq:teta_mleqteta_r}
\frac{\alpha(0,\theta_m)}{\theta_m}
=\frac{\lambda_k-\lambda_n}{\lambda_k+\lambda_n}
<
\frac{\lambda_k-\lambda_n}{\lambda_k}
=\frac{\alpha(0,\lambda_k)}{\lambda_k}
<1.
\end{equation}
By continuity, there exists $\widebar t_m>0$ such that the strict
inequalities in~\eqref{eq:teta_mleqteta_r} remain valid for every
$t\in[0,\widebar t_m]$. Taking $\tau_m:=\min\{\tau,t_m^\ast,\widebar t_m\}$
proves~\textup{(ii)}.
\end{proof}

We note that the choice $\theta= \theta_m$ is
particularly practical in applications such as variational data assimilation,
where $\lambda_n$ is often known a priori.

\section{Numerical experiments}
\label{sec:numerical_experiments}
\subsection{Computational complexity: PCG versus deflated CG}
\label{subsec:computational_cost}
Assume that $S_k=[s_1,\ldots,s_k]$ and $\Lambda_k=\diag(\lambda_1,\ldots,\lambda_k)$ are available. Both deflated CG and PCG with $U^\theta$ use CG as an iterative solver; the main computational difference lies in the cost of applying the deflation projector versus applying the scaled spectral preconditioner.
 
Deflated CG requires the projector $P(t)=I_n -V_k(t)\left(S_k^\top V_k(t)\right)^{-1}S_k^\top$, where $V_k(t)=A(t)S_k$. Constructing this projector requires $k$ products with $A(t)$ to form $V_k(t)$, $\mathcal{O}(nk^2)$ operations to assemble $S_k^\top V_k(t)$, and $\mathcal{O}(k^3)$ operations to factorize it. Once these quantities are available, applying $P(t)$ requires products with $S_k^\top$ and $V_k(t)$ and the solution of a $k\times k$ linear system. Deflated CG stores $S_k$, $V_k(t)$, and the dense matrix $S_k^\top V_k(t)$. In contrast, PCG with the scaled spectral preconditioner $U^\theta$ stores only $S_k$, the associated eigenvalues, and the scalar parameter $\theta$. Applying $U^\theta$ requires one multiplication by each of $S_k^\top$ and $S_k$, together with a diagonal scaling. 

\subsection{Synthetic data experiments}
\label{subsec:data-exp}
We consider the linear system $A(t)x=(A+tE)x=b$, where $A\in\mathbb{R}^{n\times n}$ is diagonal with entries
{\small
\begin{equation}
\label{eq:strakos_eigenvalues}
    \lambda_i = \lambda_n + \frac{n-i}{n-1}
    \left[\psi+(\lambda_1-\lambda_n-\psi)\rho^{i-1}\right],
    \qquad i=1,\ldots,n,
\end{equation}}
where $0<\rho<1$ controls the exponential decay of the spectrum and $0<\psi<\lambda_1-\lambda_n$ ensures that the smallest consecutive eigenvalue gap satisfies $\gamma_0\geq\psi/(n-1)>0$ in exact arithmetic. Setting $\psi=0$ recovers the spectrum used in~\citep{StrakosGreenbaum}. In what follows, we fix $n=4\times10^3$, $\lambda_1=10^6$, $\lambda_n=1$, and $\psi=1$. Throughout this section, $E$ is a fixed randomly generated symmetric matrix satisfying $\|E\|_F=1$, and $b=[1,\ldots,1]^\top/\sqrt{n}$. The initial guesses satisfy~\Cref{ass:z_0,ass:y_0}.

The numerical study is organized in two parts. First, we compare the PCG error with the leading term in~\Cref{theorem:Error_perturbed_case} and discuss the effect of the omitted remainder. Second, we compare the energy-norm convergence of CG, PCG using $U^\theta$, and deflated CG for the same family of test matrices. All methods are run with full reorthogonalization.  
In the supplementary material, we present additional numerical results for a variational data assimilation problem.

\subsubsection{Comparison with the theoretical bound}
\label{sec:numerical_bound_comparison}

We assess the leading term in the upper bound of~\Cref{theorem:Error_perturbed_case} by evaluating {\small$\left\| x^\ast(t) - x_{\ell+1}^{[P]}(t) \right\|_{A(t)}^2$}
 and {\small$(\alpha(t, \theta)^2/{\theta^2})
 \left\|x^\ast(t) - x_{\ell}^{[\text{D}]}(t)\right\|_{A(t)}^2$}, as functions of the perturbation size $t$ for different $\ell$ and $\rho$. We set $k=60$ and choose $\theta_m$ as scaling. For this choice, $\alpha(t,\theta_m)/\theta_m\approx0.99<1$ over the tested range of $t$. \Cref{tab:diagnostic-diagonal-case} reports, for $\rho\in\{0.9,0.94\}$, the gap $\gamma_0$, the ratio $\lambda_k/\lambda_n$ governing
$q_k$, and the separation of $\theta_m$ from the nondeflated spectrum.

\begin{table}[htbp]
\centering
\begin{tabular}{cccc}
\hline
 & $\gamma_0$ & $\lambda_k/\lambda_n$ & $\min_{i=k+1,\ldots,n}|\theta_m-\lambda_i|$\\
\hline
$\rho=0.9$  & $2.5\times10^{-4}$ & $1.969\times10^{3}$ & $4.387\times10^{1}$  \\
$\rho=0.94$ & $2.5\times10^{-4}$ & $2.559\times10^{4}$ & $1.254\times10^{2}$ \\
\hline
\end{tabular}
\caption{Diagnostic quantities: the minimum eigenvalue gap $\gamma_0$, the ratio $\lambda_k/\lambda_n$ governing $q_k$, and the separation $\min_{i>k}|\theta_m-\lambda_i|$ of the midrange scaling parameter from the nondeflated spectrum.}
\label{tab:diagnostic-diagonal-case}
\end{table}

From~\Cref{tab:diagnostic-diagonal-case}, \Cref{ass:simple_spectrum} is satisfied; choosing $\delta=\gamma_0$ ensures that $\theta_m\in\Theta_\delta$.
The behavior of the omitted remainder in~\Cref{theorem:Error_perturbed_case} is governed by $q_k$, whose size is controlled mainly by the ratio $\lambda_k/\lambda_n$. From~\Cref{tab:diagnostic-diagonal-case}, the ratio $\lambda_k/\lambda_n>10^3$ for both cases; consequently, the remainder $\gamma_1q_k^{2\ell}+\gamma_2\ell q_k^{2\ell-1}$ becomes large for large values of $\ell$.

\begin{figure}[htb]
    \centering
    \includegraphics[width=1.\linewidth]{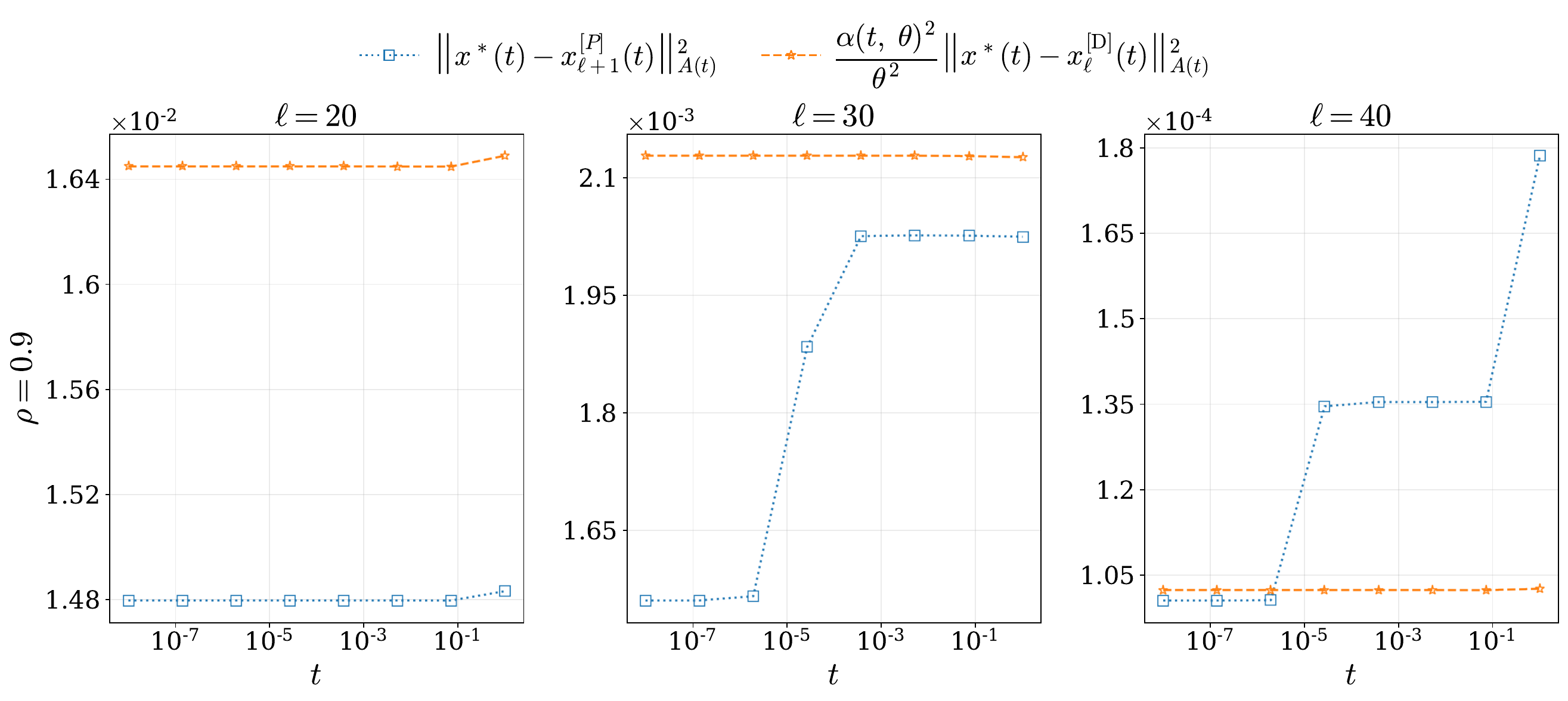}
    \caption{The squared PCG error (blue) and the leading term of the bound in~\Cref{theorem:Error_perturbed_case} (orange) for $\rho = 0.9$ and $\ell\in\{20,30,40\}$.}
    \label{fig:bound_0.9}
\end{figure}

\begin{figure}[htb]
    \centering
    \includegraphics[width=1.\linewidth]{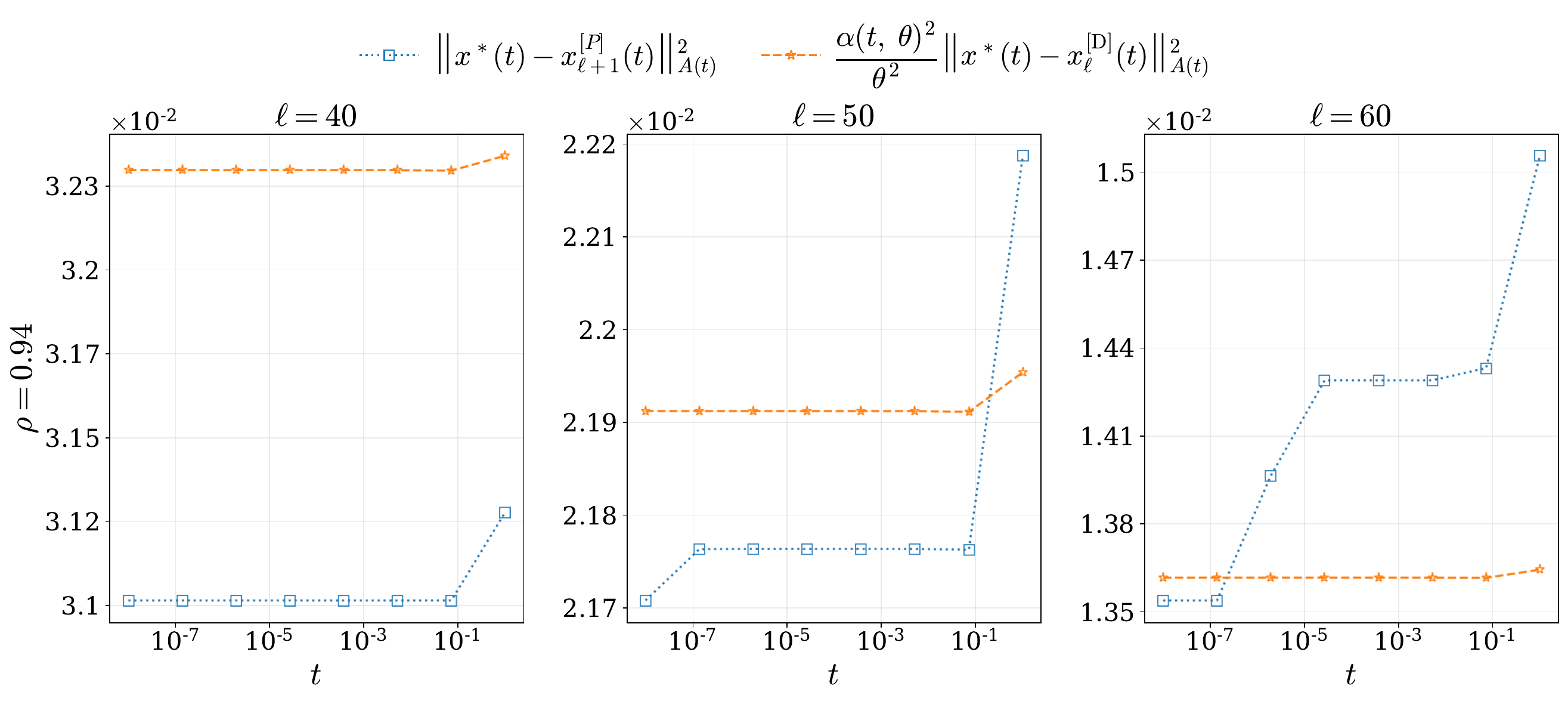}
    \caption{The squared PCG error (blue) and the leading term of the bound in~\Cref{theorem:Error_perturbed_case} (orange) for $\rho=0.94$ and $\ell\in\{40,50,60\}$.}
    \label{fig:bound_0.94}
\end{figure}

For $\rho=0.9$ and $\ell=20,30$, the leading term of the right hand side of~\eqref{eq:upperbound_explicit_ell} lies above the PCG error over the full range of perturbations. For $\ell=40$, it falls below the PCG error, although the discrepancy decreases for small $t$.
For $\rho=0.94$, the leading term lies above the PCG error for $\ell=40$ over the full range of perturbations. For $\ell=50$, it falls below the PCG error for $t$ close to $1$, and for $\ell=60$ once $t$ exceeds $10^{-6}$.
This behavior does not contradict~\Cref{theorem:Error_perturbed_case}: the theorem bounds the PCG error by the leading term \emph{plus} an $O(t^2)$ remainder, which can be large here because of $q_k$. Its influence becomes apparent at later iterations, when the energy-norm errors are small. Thus, as predicted by~\Cref{theorem:Error_perturbed_case}, the accuracy of the leading-order approximation depends strongly on $\ell$.

\subsubsection{Comparison of energy-norm errors}
\label{Sec:NumericalResultsC0}
These numerical experiments complement the preceding comparison of the theoretical bound by examining the convergence behavior of the different solvers. We consider $k\in\{60,80\}$, choose $\theta\in\{\theta_m,\lambda_k\}$, and apply the strategies listed in the table below for $t\in\{1,10^{-3},10^{-6}\}$. \Cref{tab:methods} reports the methods used in the numerical experiments.
\begin{table}[ht]
\centering
\begin{tabular}{ll}
\hline
Method & Description \\ \hline
\textbf{CG} & CG applied to $A(t) x = b$, \quad starting with $x_0 = z_0$\\  
\textbf{Deflated CG} &  CG applied to the projected system $P(t) A(t) z = P(t)b$\\
$\mathbf{PCG}(U^{\theta_\mathrm{m}})$ & CG applied to $U^{\theta} A(t) U^{\theta}y = U^{\theta} b$, with $\theta=\theta_m$ and $x=U^\theta y$\\
$\mathbf{PCG}(U^{\lambda_k})$ & CG applied to $U^{\theta} A(t) U^{\theta}y = U^{\theta} b$, with $\theta=\lambda_k$ and $x=U^\theta y$\\\hline
\end{tabular}
\caption{Description of methods used in the numerical experiments}
\label{tab:methods}
\end{table}

\begin{figure}[htb]
    \centering
    \includegraphics[width=1.\linewidth]{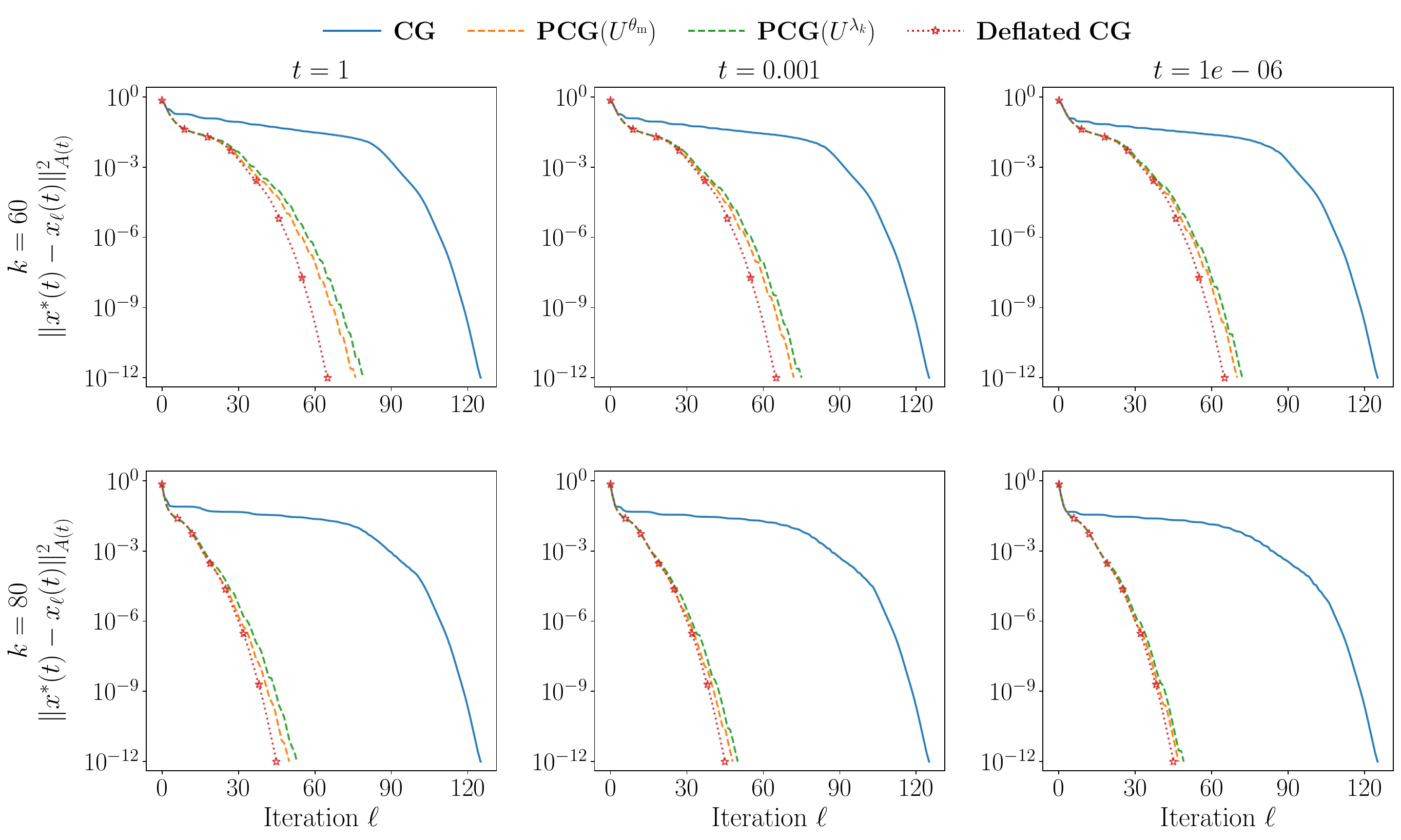}
    \caption{Squared energy-norm convergence for $\rho=0.9$. The top and bottom rows correspond to $k=60$ and $k=80$, respectively; from left to right, the columns correspond to $t=1$, $10^{-3}$, and $10^{-6}$.}
    \label{fig:energy_rho09}
\end{figure}

\begin{figure}[htb]
    \centering
    \includegraphics[width=1.\linewidth]{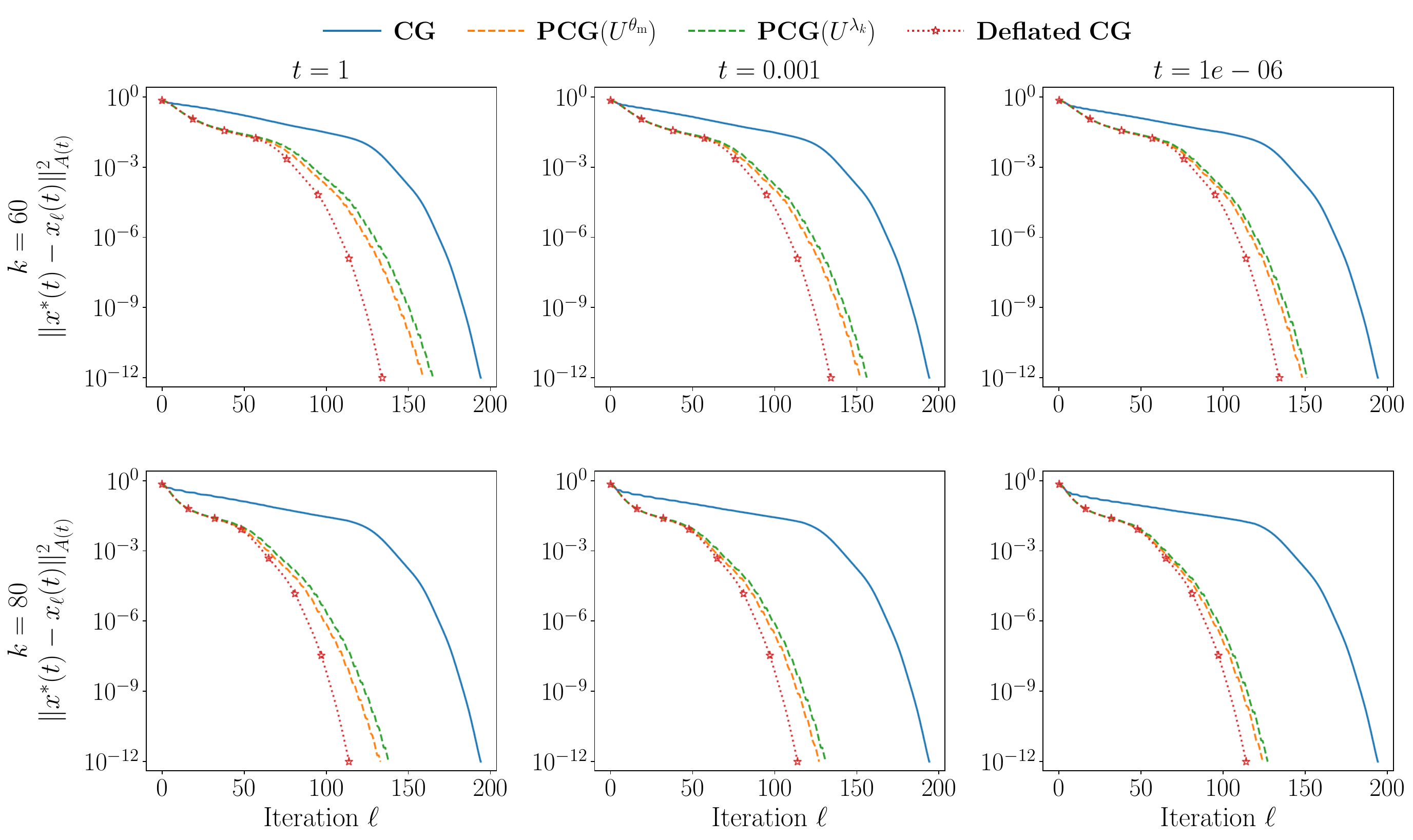}
    \caption{Same as~\Cref{fig:energy_rho09} for $\rho = 0.94$.}
    \label{fig:energy_rho094}
\end{figure}

During the first iterations, the perturbation size $t$ has little effect on the difference between deflated CG and $\mathbf{PCG}(U^{\theta_m})$. Their convergence curves separate mainly near the end of the iteration history, once the error is small. For both $\rho=0.9$ and $\rho=0.94$, $\mathbf{PCG}(U^{\theta_m})$ substantially improves upon CG and closely follows deflated CG at moderate accuracy. The difference is most visible at high accuracy and for the largest perturbation, $t=1$; it narrows for $t=10^{-3}$ and $10^{-6}$.
Increasing $k$ from $60$ to $80$ reduces the gap between $\mathbf{PCG}(U^{\theta_m})$ and deflated CG, especially during the final iterations. Increasing $k$ removes more of the largest eigenvalues and decreases $\lambda_k$. Consequently, the terms $q_k^{2\ell}$ and $\ell q_k^{2\ell-1}$ in the perturbation remainder are reduced.

Thus, $t$ primarily affects the number of late iterations, rather than the initial convergence rate. PCG with $\theta_m$ retains most of the benefit of deflation but may require a few additional iterations at high accuracy. This cost can be modest relative to constructing the deflation projector, which requires $k$ matrix--vector products with $A(t)$. The preconditioned approach is therefore especially attractive when these products are expensive.
The same conclusion holds for $\mathbf{PCG}(U^{\lambda_k})$, although $\mathbf{PCG}(U^{\theta_m})$ performs better, consistently with~\Cref{prop:contractive_scalings}.

\section{Conclusion}
\label{sec:conclusion}
We analyzed sequences of SPD linear systems in which spectral information from a reference matrix $A$ with simple eigenvalues is reused for a perturbed matrix $A(t)=A+tE$. Our comparison considered deflated CG and PCG equipped with a scaled spectral preconditioner constructed from the same selected eigenvectors of $A$.

An energy-norm characterization of deflated CG, combined with first-order perturbation analysis, yields an asymptotic comparison of the two methods. After one additional PCG iteration, the squared PCG error is bounded, up to higher-order perturbation terms, by an explicit multiple of the squared deflated CG error. The analysis also motivates the scaling parameter $\theta_m$.
Numerical experiments show that the scaled spectral preconditioner closely reproduces the convergence of deflated CG while avoiding the construction of the deflation projector and its associated products with the perturbed matrix. PCG with this preconditioner is therefore a promising alternative to deflation for sequences of closely related large-scale systems.

An important direction for future work is to extend the perturbation analysis to SPD matrices with multiple eigenvalues. We also fix the perturbation direction $E$ and vary only its magnitude $t$; allowing $E$ to vary and deriving bounds over a class of symmetric perturbations would yield a more general analysis.

\small
\bibliographystyle{abbrvnat}  
\bibliography{abbrv,references}

\clearpage

%\tableofcontents

\end{document}

%% file: ex_shared.tex
\headers{A deflation-based preconditioner for PCG}{Y. Diouane, S. G\"urol, O. Mouhtal and D. Orban}

\title{Deflation-Based Spectral Preconditioning for the Conjugate Gradient Method on Perturbed  Systems \thanks{Submitted to the editors DATE.
\funding{This research was partially funded by the Natural Sciences and Engineering Research Council of Canada (NSERC) Discovery Grants held by Diouane and Orban. G\"urol and Mouhtal  were partially funded by the French national programme LEFE/INSU.}}}

\author{Y. DIOUANE\thanks{GERAD and Department of Mathematics and Industrial Engineering, Polytechnique Montr\'eal. 
(\email{youssef.diouane@polymtl.ca},
\email{dominique.orban@polymtl.ca},
\email{oussama-2.mouhtal@polymtl.ca}).}
\and S. G\"urol\thanks{CERFACS / CECI
CNRS UMR 5318, Toulouse, France.
  (\email{gurol@cerfacs.fr}).}
\and  O. MOUHTAL\footnotemark[3] \footnotemark[2]  
\and D. ORBAN\footnotemark[2]  
}